%
%
\documentclass[reqno]{tpms-lmyversion}

\usepackage{url}
\usepackage{eurosym}
\usepackage{enumitem}
\usepackage{graphicx}
\usepackage{epstopdf}
\usepackage[justification=centering]{caption}
\usepackage{subfigure}
\usepackage{bbm}
\usepackage{dsfont}


\newtheorem{thm}{Theorem}[section]

\newtheorem{prop}[thm]{Proposition}
\newtheorem{cor}[thm]{Corollary}

\theoremstyle{definition}
\newtheorem{defn}[thm]{Definition}
\newtheorem{example}[thm]{Example}

\newcounter{MyStepNo}
\DeclareRobustCommand{\mystep}[1]{%
   \refstepcounter{MyStepNo}%
   \theMyStepNo\label{#1}}

\newcounter{algoStepNo}

\theoremstyle{remark}

\numberwithin{equation}{section}



\begin{document}

\title[High-Roller Impact]{High-Roller Impact: A Large Generalized Game Model of Parimutuel Wagering}


\author{Erhan Bayraktar}
\address{Department of Mathematics, University of Michigan, Ann Arbor, Michigan 48109}
\email{erhan@umich.edu}
\thanks{This work is supported by the National Science Foundation under DMS-1613170. We gratefully acknowledge the anonymous referees for their valuable advice on improving the paper.}

\author{Alexander Munk}
\address{Department of Mathematics, University of Michigan, Ann Arbor, Michigan 48109}
\email{amunk@umich.edu}

\subjclass[2010]{Primary 91B26, 91B69, 91B74.}



\keywords{Parimutuel wagering, Large generalized games, Nash equilibrium.}

%

\begin{abstract}
How do large-scale participants in parimutuel wagering events affect the house and ordinary bettors? A standard narrative suggests that they may temporarily benefit the former at the expense of the latter. To approach this problem, we begin by developing a model based on the theory of large generalized games. Constrained only by their budgets, a continuum of diffuse (ordinary) players and a single atomic (large-scale) player simultaneously wager to maximize their expected profits according to their individual beliefs. Our main theoretical result gives necessary and sufficient conditions for the existence and uniqueness of a pure-strategy Nash equilibrium. Using this framework, we analyze our question in concrete scenarios. First, we study a situation in which both predicted effects are observed. Neither is always observed in our remaining examples, suggesting the need for a more nuanced view of large-scale participants.
\end{abstract}

\maketitle

\section{Introduction}\label{intro sect}

Suppose that a collection of bettors are wagering on an upcoming event. The payoffs are determined via a {\it (frictionless) parimutuel system} if whenever Outcome $i$ occurs, Bettor $A$ receives 
\begin{align*}
\left(\text{Total Amount Wagered} \right) \left( \frac{ \text{Bettor $A$'s Wager on Outcome $i$}}{\text{Total Amount Wagered on Outcome $i$}} \right)  . 
\end{align*}
The idea is that players with correct predictions will proportionally share the final betting pool. Prizes are reduced in practice by transaction costs such as the {\it house take}, a percentage fee collected by the {\it betting organizer} (or {\it house}). For example, Bettor $A$ might only win
\begin{align}\label{friction pmg payoff words}
\kappa \left(\text{Total Amount Wagered} \right) \left( \frac{ \text{Bettor $A$'s Wager on Outcome $i$}}{\text{Total Amount Wagered on Outcome $i$}} \right)  
\end{align}
when Outcome $i$ occurs, if the house take is $\left( 1- \kappa \right) \%$ for some $0 < \kappa <1$.

This mechanism was invented in the context of horse race gambling by Oller in the late 1800's (\cite{oller+bio}) and remains widely employed in that setting: In 2014, worldwide parimutuel betting on horse races totaled around seventy-five billion euros (\cite{horse+racing+fed}). It also typically determines wagering payoffs for other sports such as jai alai and races involving bicycles, motorcycles, motorboats, and greyhounds (\cite{baron2007parimutuel}). Certain prizes for major lotteries such as Mega Millions, Powerball, and ``EuroMillions'' are computed in a parimutuel fashion (\cite{euromil}). Parimutuel systems are increasingly popular methods for distributing payoffs in online prediction markets as well (\cite{Peters2007}). Goldman Sachs Group, Inc., Deutsche Bank AG, CME Group Inc., Deutsche B\"{o}rse AG, and ICAP have even facilitated the development of parimutuel derivatives on economic indicators (\cite{baron2007parimutuel}).

The first scholarly publication on parimutuel wagering was written by Borel in 1938 (\cite{borel1938pari}), and more recent surveys and anthologies (\cite{thaler1988parimutuel}; \cite{hausch2011handbook}; \cite{hausch2008efficiency}) attest to the substantial academic interest garnered by this topic since. A vast range of issues from optimal betting (\cite{isaacs}; \cite{rosner}; \cite{bolt+chap}) to market efficiency (\cite{hzr};  \cite{ASCH1982187}; \cite{hurley1995note}) to market microstructure (\cite{lange+econ}; \cite{Peters2007}; \cite{Pennock}) has been extensively studied. Significant attention has been paid to strategic interactions among bettors (\cite{quandt+bet+eq}; \cite{ott+sor+surprise}; \cite{Koessler2008733}; \cite{terr+farmer}; \cite{plott}).

Similar to the rise of high-frequency and algorithmic traders in financial markets, a growing number of parimutuel wagering event participants are organizations employing large-scale strategies based upon advanced mathematical, statistical, and computational techniques (\cite{wired}). There are fundamental differences between these bettors and more traditional wagerers. The new firms typically have access to vast budgets, making their betting totals orders of magnitude beyond the amounts wagered by regular players. Often, they can place their wagers at speeds impossible for ordinary bettors to match. Presumably, their use of complex methods also makes their forecasts and corresponding wagering strategies generally superior.

The house collects a percentage of the total amount wagered and, therefore, may initially benefit from the presence of large-scale wagering firms. After all, their activities should increase the size of the pool, at first anyway. The factors just described are thought to put ordinary bettors at an extreme disadvantage, though. Since payouts are calculated according to (\ref{friction pmg payoff words}), ordinary bettors' profits may even directly decline as a result of the large-scale firms' wagers. If this discourages enough regular players from betting, then pool sizes may eventually dwindle, hurting the house's revenue. In fact, many betting organizers have publicly expressed strong concerns about the new breed of wagerers. Betting organizers have even occasionally banned these participants from parimutuel wagering events (\cite{wired}).

How reasonable is this narrative? Our goal is to quantify the impact of large-scale participants in parimutuel wagering events on the house and ordinary bettors.

First, using the theory of {\it large generalized games}, i.e., games with a continuum of diffuse (or non-atomic/minor) players and finitely many atomic (or major) players, we develop a model of parimutuel betting. The bets made by individual atomic players affect all others because they change the final payoff per unit bet on Outcome $i$:
\begin{align}\label{payouts per unit bet words}
\kappa  \left( \frac{ \text{Total Amount Wagered}}{\text{Total Amount Wagered on Outcome $i$}} \right)  . 
\end{align}
Aggregate decisions made by the diffuse players also affect every player for the same reason. A key feature is that an individual diffuse player cannot change (\ref{payouts per unit bet words}) by revising her wagers. In fact, her specific choices have no effect whatsoever on the rest of the game's participants.

We view diffuse and atomic players as stand-ins for ordinary bettors and large-scale wagering firms, respectively. The approximation is motivated by the observation that the total amount wagered by a single traditional bettor is generally negligible compared to the total amount wagered by an entire betting firm.

Our main theoretical result, Theorem \ref{main thm}, provides necessary and sufficient conditions for the existence and uniqueness of a pure-strategy Nash equilibrium. Other scholars have shown the existence of equilibria in a broad class of large generalized games (\cite{Balder1999207}; \cite{Balder2002437}; \cite{Carmona2014130}; \cite{riascos2013pure}). Such results often rely upon sophisticated technology including variants of the Kakutani fixed-point theorem. We choose to employ a more elementary fixed-point argument instead. Advantages of our approach include its simplicity and the possibility of proving the equilibrium's uniqueness. A simple algorithm for computing relevant equilibrium quantities immediately presents itself as well.

Having such an algorithm allows us to analyze our problem in specific scenarios. In accordance with the prevailing narrative, we observe that because of the atomic player, the house is temporarily better off and the diffuse players are worse off in Example \ref{exmp 1}. For varying reasons, at least one of these effects is not observed in each of the remaining situations. 

In Example \ref{exmp 2}, the diffuse players are better off in the presence of the atomic player. Intuitively, when the event is {\it too close to call}, the atomic player can bet on the {\it wrong} outcome even if her prediction is assumed to be quite accurate. Such an error is to the advantage of the diffuse players.

In Example \ref{exmp 3}, the diffuse players {\it believe} that they are better off when there is an atomic player. Roughly, if the diffuse players' beliefs are too homogeneous but the atomic player disagrees with them, the diffuse players' expected profits per unit bet rise when the atomic player takes the other side of their wagers.

In Example \ref{exmp 4}, we argue that the diffuse players are better off but the house is (immediately) worse off because of the atomic player, exactly the opposite of the prevailing narrative.\footnote{Recall that the alleged decline in the house's revenue is thought to occur over time.} To make this point, we recast our model as a two-stage game taking into account the house's strategic decisions. Effectively, because the atomic player considers her impact on (\ref{payouts per unit bet words}), she has a lower tolerance for unfavorable betting conditions than the diffuse player. When she is absent, this means that the house can more easily prey upon the diffuse players.

Before offering further details, we more thoroughly discuss related literature in Section \ref{lit review sect}. We carefully present our model in Section \ref{model sect} and our main theoretical result in Section \ref{main result sect}. We numerically investigate our concrete examples in Section \ref{leb meas sect}. Appendix \ref{corr app} highlights a few technical aspects of Section \ref{main result sect}'s work. We give our longer formal proofs in Appendices \ref{prop 2 proof app} and \ref{main thm proof app}.

\section{Related Literature}\label{lit review sect}

The individual states of our diffuse players are only coupled via the empirical distribution of controls. Since each diffuse player is too small to influence this distribution, she treats it as fixed when determining her own strategy. Assumptions like these have appeared in the literature on continuum games (\cite{aumann}; \cite{schmeidler}; \cite{MASCOLELL}; \cite{rath}) and mean-field games (\cite{Lasry2007}; \cite{huang2006}).\footnote{Originally, players were coupled via the empirical distribution of states, not controls, in the mean-field game literature. Recent advances have shown that models with additional interactions through the controls also have promising applications, say in the contexts of price impact, optimal execution, high-frequency trading, 
and oligopolistic energy market problems
(\cite{2013arXiv1305.2600G}; \cite{carmona2015}; \cite{2015arXiv150807914G}; \cite{chan+sirc+frack}; 
\cite{chan+sirc+courn}).}

Our paper bears a stronger resemblance to work in the former category. For instance, we model parimutuel wagering as a static game. Such a choice is quite common in a continuum game study; however, stochastic differential games are more often the focus in mean-field game theory. Also, we restrict ourselves to an intuitive argument for viewing ordinary bettors as diffuse players. Papers on mean-field games often rigorously present their continuum model as a limit of finite population models, while those on continuum games typically emphasize other issues.

General mean-field interactions among players can be described by complex functions of the empirical distributions of states and/or controls. On the other hand, parimutuel wagerers affect one another through (\ref{payouts per unit bet words}) alone, a comparatively simple scenario. This makes parimutuel wagering an especially strong candidate for modeling by either theory. Continuum games have already been applied in this way (\cite{ottaviani2006timing}; \cite{Takahiro199785}). 

Watanabe considered a two-stage game with a betting organizer and a continuum of risk-neutral diffuse players with heterogeneous beliefs (\cite{Takahiro199785}). First, the betting organizer selects a value for the house take. The diffuse players then determine whether to place a unit bet on one of two outcomes or bet nothing at all. Using techniques from set-valued analysis, Watanabe showed that an equilibrium always exists, provided the house take is not too large. As long as each player can only bet negligible amounts, Watanabe also found that equilibria in parimutuel wagering games are {\it regular}. That is, if a player wagering on Outcome $i$ believes that Outcome $i$ will occur with probability $p$, all players who believe that Outcome $i$ will occur with probability $p^{\prime} > p$ also wager on Outcome $i$. The paper's results on the betting organizer's optimal strategy were in the context of specific examples.

Ottaviani and S\o renson developed their continuum game to explain two phenomena frequently observed in the context of parimutuel wagering on horse races: {\it late informed betting} and the {\it favorite-longshot bias} (\cite{ottaviani2006timing}). The first states that more accurate information about a race's outcome can be gleaned from late bets than early bets. The second says that the public tends to excessively bet on unlikely outcomes and wager too little on likely outcomes. A continuum of privately informed risk-neutral players decide when to place their individual bets in a discrete-time setting. They can wager a unit amount on one of two outcomes or abstain from betting. The paper gave conditions under which all of these players simultaneously wager at the terminal time, and the corresponding equilibrium is shown to always feature the favorite-longshot bias.

Others have more implicitly created infinite-player parimutuel wagering models by assuming that there are {\it many} bettors (\cite{hurley1995note}; \cite{Ottaviani08thefavorite-longshot}). These references have sought to identify other possible causes of the favorite-longshot bias.

The most important new feature of our setup is that, in addition to the diffuse bettors, we introduce an atomic bettor. The parimutuel wagering studies we just discussed only incorporate diffuse players. We would not be able to understand the effects of large-scale wagering organizations on ordinary bettors, if we made a similar assumption.

Because of this addition, our model belongs to the class of continuum game models known as {\it large generalized games}. Games that include both diffuse and atomic players can be found in mean-field game theory as well under the heading {\it major-minor player models} (\cite{huang}; \cite{nguyen+huang}; \cite{nourian+caines}; \cite{jaimungal2015mean}; \cite{wang+tang+huang}). Applications of large generalized games are known to be diverse and already include a collection of problems from politics (\cite{Correa2014}) to oligopolistic markets (\cite{wiszniewska2008dynamic}). General results on the existence of equilibria in large generalized games have also been obtained (\cite{Balder1999207}; \cite{Balder2002437}; \cite{Carmona2014130}; \cite{riascos2013pure}). We choose not to rely upon these, as the simple structural aspects of parimutuel wagering just discussed, combined with a convenient modeling assumption (see Section \ref{model  sect}), allow us to use elementary arguments.

Ultimately, we employ a mean-field approximation for the standard reason: By doing so, we make our model tractable, hopefully while preserving the critical macroscopic properties of our original problem. Issues other than the impact of large-scale wagering organizations have been resolved in finite-player settings, though such studies have usually invoked other strong assumptions.

Weber gave sufficient conditions for the existence of an equilibrium in a simultaneous parimutuel betting game with $N$ atomic players, each of whom is risk-neutral and must bet a specific total amount (\cite{MR637486}). Watanabe, Nonoyama, and Mori's setup and goals are similar to those in Watanabe's continuum game paper, except that the diffuse players in the latter are replaced by finitely many atomic players (\cite{wat+non+mori}; \cite{Takahiro199785}). Explanations of the favorite-longshot bias have been offered using equilibrium results for $N$-player parimutuel wagering games (\cite{chadha1996betting}; \cite{Koessler2008733}; \cite{ott+sor+surprise}; \cite{potters+wit}). Games in which finitely many bettors wager sequentially have also been investigated for various purposes (\cite{Feeney2001165}; \cite{cheung}; \cite{koess+zieg+broi}; \cite{Koessler2008733}; \cite{thrall}). For example, Thrall showed that if risk-neutral atomic bettors with homogeneous beliefs wager sequentially, their profits tend to zero as the number of bettors increases (\cite{thrall}). Note that some of these works do consider limiting cases in which the population of wagerers grows arbitrarily large to complement their other insights (\cite{thrall}; \cite{ott+sor+surprise}).

\section{Model Details}\label{model sect}

Our players have the opportunity to wager on an event that can unfold in two mutually exclusive ways: Outcome 1 might occur. If not, Outcome 2 will occur. For now, we view $\kappa \in \left( 0 , 1 \right)$ to be exogenously given. Inspired by Watanabe et al. (\cite{wat+non+mori}; \cite{Takahiro199785}), we later informally consider what happens when we allow the house take to be optimally selected by the betting organizer in the first stage of a two-stage game (see Example \ref{exmp 4}). Our results in Section \ref{main result sect} are unaffected by such a shift.

The unit interval describes the diffuse bettors' views on the likelihood of Outcome 1: the bettors whose views are indexed by $p \in \left[ 0 , 1 \right]$ believes that Outcome 1 will occur with probability $p$. Initially, each diffuse bettor has some (negligible) unit wealth. A finite Borel measure $\mu$ with a continuous everywhere positive density characterizes the distribution of the diffuse bettors. More precisely, the total initial wealth of all diffuse bettors whose views are contained in a Borel set $A$ is $\mu \left( A\right)$.

That $\mu$ has a continuous everywhere positive density is our {\it convenient modeling assumption} from Section \ref{lit review sect}. It is similar to a key hypothesis in Ottaviani and S\o renson's work, although the posterior beliefs of their diffuse bettors are obtained after updating a common prior belief using a private signal and Bayes' rule (\cite{ottaviani2006timing}). Effectively, Watanabe assumed that $\mu$ need not have a density, and even when it does, the density need not be positive everywhere (\cite{Takahiro199785}). These choices necessitated a set-valued approach, which we are able to avoid. 

Continuity merely simplifies a few of our arguments, e.g., see Step \ref{step 6 alt varphi dec} of Theorem \ref{main thm}'s proof. The rest of the assumption plays a more critical role. Results on the existence of equilibria in parimutuel wagering games often include a hypothesis such as the following: for any given outcome, at least two bettors believe that the outcome will occur with positive probability.\footnote{This is true of Weber's work, for instance (\cite{MR637486}). Roughly, if only one player believes that a particular outcome will occur with positive probability, she should wager an arbitrarily small amount on that outcome. In the absence of a positive minimum bet size constraint, it follows that an equilibrium does not exist.} By supposing that the density is positive, we assume this as well. Watanabe has shown that an equilibrium may not be unique, if $\mu \left( \left\{ p \right\} \right) > 0$  for some fixed $p$ (\cite{Takahiro199785}). Obviously, this situation is prevented by the density's existence.

Our atomic player believes that Outcome 1 will occur with probability $q \in \left[ 0 , 1 \right]$. She has (non-negligible) finite initial wealth $w > 0$.

Throughout, we treat all players' beliefs as exogenously determined. We do not address how the players generate their estimates; however, this process is of great interest both practically and academically (\cite{hausch2011handbook}; \cite{hausch2008efficiency}). For our theoretical results in Section \ref{main result sect}, we also do not specify how the players' estimates compare to the {\it actual} probability that Outcome 1 will occur. Since large-scale betting organization allegedly produce highly accurate forecasts, we could choose $q$ to be some small perturbation of the actual probability of Outcome 1. We informally experiment with this extra assumption in Examples \ref{exmp 1} and \ref{exmp 2}.

All players decide how much to wager on each outcome. Their choices are constrained only by their initial wealth: a betting strategy is {\it feasible} (or {\it admissible}) for an individual bettor as long as the sum of her wagers is no more than her wealth. For example, a bettor could choose to wager 100\% of her wealth on Outcome 1, 55\% of her wealth on Outcome 1 and 30\% of her wealth on Outcome 2, or not wager at all. We formalize this as follows.\footnote{In practice, concerns about large-scale wagering firms are also driven by their alleged ability to bet {\it faster} than ordinary players. Our static game only has two outcomes, so it does not make sense to model this feature here. We hope to revisit the issue in a future work.}

\begin{defn}\label{strat profile defn}
A {\it feasible strategy profile for the diffuse players} is a measurable function 
\begin{equation*}
f = \left( f_1 , f_2 \right) : \left( 0 , 1 \right)  \longrightarrow \left\{ \left(x_1 , x_2 \right) \in \mathbb{R}^2_{\geq 0} \,  : \,  x_1 + x_2 \leq 1 \right\}.  
\end{equation*}
A {\it feasible strategy profile for the atomic player} is a vector $a = \left( a_1 , a_2 \right) \in \mathbb{R}^2_{\geq 0}$ such that 
\begin{equation*}
a_1 + a_2 \leq w. 
\end{equation*}
We call the pair $\left( f , a \right)$ a {\it feasible strategy profile}. 
\end{defn}

Under $\left( f , a \right)$, the atomic player wagers $a_i$ on Outcome $i$. Each diffuse player who believes that Outcome 1 will occur with probability $p$ wagers $f_i \left( p \right) \times 100 \%$ of her (negligible) unit initial wealth on Outcome $i$.

Our space of feasible strategy profiles is slightly atypical. Previously, diffuse players in parimutuel wagering games have only been able to place unit bets, if they bet at all (\cite{ottaviani2006timing}; \cite{Takahiro199785}). We could have made this restriction as well without loss of generality due to Proposition \ref{diff equilib bets prop}. Watanabe allowed groups of diffuse players to wager differently, even if they held identical beliefs (\cite{Takahiro199785}). In equilibrium, such a discrepancy could only arise among the diffuse players who believed that their expected profits would be zero. We encounter a related ambiguity in our framework (see our discussion of Proposition \ref{diff equilib bets prop}). For us, the $\mu$-measure of this set of bettors is zero, and we anticipate that all of our main results would remain the same, if we were to relax the diffuse bettors' {\it same beliefs-same bets} restriction. More significantly, atomic players have been frequently constrained to wager a fixed amount in total or a unit amount on a single outcome when they bet (\cite{MR637486}; \cite{wat+non+mori}; \cite{chadha1996betting}; \cite{ott+sor+surprise}).\footnote{An exception is Cheung's thesis, though that work's focus is quite different from our own (\cite{cheung}).} Proposition \ref{atom equilib bets prop} suggests that imposing these restrictions would have a severe effect.

The total amount that the diffuse players wager on Outcome $i$, denoted $d_i$, is given by
\begin{equation*}
d_i = \displaystyle\int_{0}^1 f_i\left( p \right) \mu \left( d p \right) .  
\end{equation*}
Since $\mu$ has a density, we immediately confirm that the bets placed by any given diffuse player are too small to affect the amount wagered on any specific outcome. Of course, if she revises her strategy, then a particular diffuse player affects neither the total amount wagered nor (\ref{payouts per unit bet words}). All of these quantities could change when aggregations of diffuse players, that is, collections of diffuse players whose beliefs are contained in a Borel set $A$ with positive $\mu$-measure, revise their wagers.

Payoffs are determined according to (\ref{friction pmg payoff words}). Each player selects her wagering strategy simultaneously in order to maximize her expected profit according to her beliefs. We implicitly assume that every bettor knows $\kappa$, $\mu$, $q$, and $w$, so that she can select the best response to her opponents' collective actions. Similar assumptions can be found in many other equilibrium studies on parimutuel wagering (\cite{MR637486}; \cite{wat+non+mori}; \cite{Takahiro199785}; \cite{chadha1996betting}; \cite{ott+sor+surprise}).\footnote{Even so, our players might seem unrealistically knowledgeable and confident in their beliefs.
Real bettors would presumably have a more complex prior for the outcomes' likelihoods. 
They probably would not have access to such comprehensive information about their opponents.
If they somehow had a sense of these details, they might also wish to update their 
own forecasts.

An ad-hoc but simple way to address these objections could be to consider 
$\kappa$ as some (publicly known) negative perturbation of the true $\kappa$ 
(abusing notation).
The idea is that each player would model transaction costs as being higher 
than their actual value, artificially lowering their perceived edge and 
encouraging them to bet more cautiously than they would otherwise.

A more thorough study could begin by determining whether or not
approximating parimutuel wagering using the Nash equilibrium solution concept is, indeed,
reasonable.
If it is, one might endow the bettors with more sophisticated priors
and enable them to update their beliefs using the equilibrium 
implied probabilities (see Definition \ref{impl prob defn}).
This would lead to an extra condition in Definition \ref{maj min NE def}. 

If the solution concept is unreasonable, one could devise 
a new scenario in which players independently determine their betting
strategies according to individual reference models and appropriately penalized 
alternative models for outcome likelihoods, as well as their opponents' parameters.
Broadly speaking, this treatment of a single player's optimization problem has seen widespread 
use across macroeconomics and finance (\cite{herr+jmk+hedg}).
One might then investigate what unfolds when all players simultaneously participate
in the same parimutuel wagering event.

We leave further consideration of these topics for a future work.}

Since each diffuse player starts out with negligible unit wealth, technically, we should only discuss the expected profits of diffuse bettors whose views lie in some Borel set $A$. We nevertheless compute and refer to the expected profits of an individual diffuse bettor in an obvious, but admittedly informal, way. Doing so helps motivate our definition of a {\it pure-strategy Nash equilibrium} (see Definition \ref{maj min NE def}) and highlight the intuition underlying our results.

A seemingly more formidable concern is how to handle (\ref{friction pmg payoff words}) in pathological cases. It is trivial to produce a feasible strategy profile $\left( f, a \right)$ such that for some $p$, we have $f_j \left( p \right) > 0$ but 
\begin{equation*}
d_j = a_j  = 0 . 
\end{equation*}
Na{\"i}vely translating (\ref{friction pmg payoff words}), we conclude that a diffuse player whose views are indexed by $p$ receives
\begin{align}\label{diff pay eqn bef impl prob}
\kappa \left( \displaystyle\sum_{i=1 }^2 \left(  d_i  + a_i  \right) \right) \left(  \displaystyle\frac{  f_j \left( p \right) }{    d_j + a_j  } \right)
\end{align}
whenever Outcome $j$ occurs.

If
\begin{equation*}
d_1 = d_2 = a_1= a_2  = 0 ,
\end{equation*}
then the total amount wagered is zero. Practically, the betting organizer would probably cancel such an event, which suggests that it is natural to set all players' payoffs to zero in this scenario.

Alternatively, we might have 
\begin{equation*}
d_i+ a_i  >0  
\end{equation*}
for $i \ne j$. There are now two appealing options for the diffuse player's payoff. First, we might choose to set the payoff to zero. Practically, no bettor would receive a payout, if Outcome $j$ occurred but no one wagered on it. We also might set the payoff to $+\infty$, as in Watanabe's work (\cite{Takahiro199785}). In practice, if the amount wagered on Outcome $j$ were zero, each player who believed that Outcome $j$ would occur with positive probability would want to place an arbitrarily small bet on Outcome $j$. Setting the payoff to $+\infty$ captures this intuition.

We choose the first option, but selecting the second instead would not change our results. Only equilibrium payoffs need to be computed, and in an equilibrium, positive amounts are always wagered on both outcomes. Essentially, the scenario we describe never arises. One reason is that we do not allow the {\it trivial} (or {\it null}) equilibrium in which no player wagers. We could,\footnote{This situation could be considered an equilibrium because if any single player unilaterally revised her wagers, intuitively, she should incur a loss of at least $\left( 1 - \kappa \right)\%$.} but as Watanabe observed, that case is comparatively uninteresting and practically unimportant (\cite{Takahiro199785}). Roughly, the other reason is the same as our justification for possibly setting the payoff to $+\infty$.

Before making this discussion precise, we introduce some notation.

\begin{defn}\label{impl prob defn}
Given a feasible strategy profile $\left( f , a \right)$ such that at least one of the $d_i$'s or $a_i$'s is positive, the {\it implied (or subjective) probability} that Outcome 1 will occur, denoted $P^{f,a}$, is defined by 
\begin{equation*}
P^{f,a} = \displaystyle\frac{ d_1  + a_1 }{  \sum_{i=1 }^2 \left(  d_i   + a_i  \right) }   . 
\end{equation*}
We refer to  
\begin{equation*}
1- P^{f,a} = \displaystyle\frac{ d_2  + a_2 }{  \sum_{i=1 }^2 \left(  d_i   + a_i  \right) }    
\end{equation*}
as the {\it implied (or subjective) probability} that Outcome 2 will occur. 
\end{defn}

$P^{f,a}$ is the ratio of the amount wagered on Outcome 1 to the total amount wagered, assuming the latter is positive. Our previous discussion implies that $P^{f,a} \in \left( 0 , 1 \right)$ in equilibrium.\footnote{We later observe that $P^{f,a} \in \left(  1- \kappa , \kappa \right)$ in equilibrium (see Step \ref{step 5 orig} of Theorem \ref{main thm}'s proof).} Since the argument was informal, we do not yet take this as fact. In particular, the amount received by a diffuse player who believes that Outcome 1 will occur with probability $p$ is\footnote{Of course, we still abuse notation here. When one of our indicator functions is equal to zero, the corresponding fraction is actually of the form $0/0$, not zero as we suppose.}
\begin{align*}
\kappa \left( \displaystyle\sum_{i=1 }^2 \left(  d_i  + a_i  \right) \right) \left(  \displaystyle\frac{  f_1 \left( p \right) \mathds{1}_{ \left\{ d_1 + a_1 \ne 0 \right\} }   }{    d_1 + a_1  } \right)  =  \displaystyle\frac{ \kappa  f_1 \left( p \right) \mathds{1}_{ \left\{ P^{f,a} \ne 0 \right\} }  }{  P^{f,a}   } 
\end{align*}
and
\begin{align*}
\kappa \left( \displaystyle\sum_{i=1 }^2 \left(  d_i  + a_i  \right) \right) \left(  \displaystyle\frac{  f_2 \left( p \right) \mathds{1}_{\left\{ d_2 + a_2 \ne 0 \right\}  }   }{    d_2 + a_2  } \right)  = \displaystyle\frac{ \kappa  f_2 \left( p \right) \mathds{1}_{ \left\{ P^{f,a} \ne 1 \right\} }  }{  1- P^{f,a}   } 
\end{align*}
when Outcomes 1 and 2 occur, respectively. Hence, this diffuse player believes that her expected profit is
\begin{align}\label{exp prof diff player intuit}
f_1 \left( p \right)  \left( \displaystyle\frac{ \kappa \, \mathds{1}_{\left\{ P^{f, a}\ne 0 \right\}} \,  p }{ P^{f, a}} - 1 \right) + f_2 \left( p \right)  \left( \displaystyle\frac{ \kappa \, \mathds{1}_{ \left\{  P^{f, a} \ne 1  \right\} } \,  \left( 1 - p \right) }{  1 - P^{f, a} } - 1 \right) . 
\end{align}
Similarly, the atomic player thinks that her expected profit is 
\begin{align}\label{exp prof atom player intuit}
a_1  \left( \displaystyle\frac{ \kappa \, \mathds{1}_{\left\{ P^{f, a}\ne 0 \right\}} \,  q }{ P^{f, a}} - 1 \right) + a_2 \left( \displaystyle\frac{ \kappa \, \mathds{1}_{ \left\{  P^{f, a} \ne 1  \right\} } \,  \left( 1 - q \right) }{  1 - P^{f, a} } - 1 \right)  . 
\end{align}

\begin{defn}\label{maj min NE def}
A {\it pure-strategy Nash equilibrium} is a feasible strategy profile $\left( f^\star , a^\star \right)$ such that 
\begin{enumerate}[label=(\roman*)]
\item \label{NE def orig i} at least one of the $d_i^{\star}$'s or $a_i^{\star}$'s is positive;
\item \label{NE def orig ii} for any $p \in \left[ 0 , 1 \right]$, 
\begin{align*}
&f_1^{\star} \left( p \right)  \left( \displaystyle\frac{ \kappa \, \mathds{1}_{\left\{ P^{f^{\star}, a^{\star}}\ne 0 \right\}} \,  p }{ P^{f^{\star}, a^{\star}}} - 1 \right) + f_2^{\star} \left( p \right)  \left( \displaystyle\frac{ \kappa \, \mathds{1}_{ \left\{  P^{f^{\star}, a^{\star}} \ne 1  \right\} } \,  \left( 1 - p \right) }{  1 - P^{f^{\star}, a^{\star}} } - 1 \right) \notag \\
&\quad = \displaystyle\sup_{\substack{   b_1 , b_2 \geq 0 \\ b_1 + b_2 \leq 1 }} \left\{ b_1   \left( \displaystyle\frac{ \kappa \, \mathds{1}_{\left\{ P^{f^{\star}, a^{\star}}\ne 0 \right\}} \,  p }{ P^{f^{\star}, a^{\star}}} - 1 \right) + b_2  \left( \displaystyle\frac{ \kappa \, \mathds{1}_{ \left\{  P^{f^{\star}, a^{\star}} \ne 1  \right\} } \,  \left( 1 - p \right) }{1 - P^{f^{\star}, a^{\star}} } - 1 \right)  \right\} ;
\end{align*}
\item \label{NE def orig iii} and
\begin{align*}
& a_1^{\star}  \left( \displaystyle\frac{ \kappa \, \mathds{1}_{\left\{ P^{f^{\star}, a^{\star}}\ne 0 \right\}} \,  q }{ P^{f^{\star}, a^{\star}}} - 1 \right) + a_2^{\star} \left( \displaystyle\frac{ \kappa \, \mathds{1}_{ \left\{  P^{f^{\star}, a^{\star}} \ne 1  \right\} } \,  \left( 1 - q\right) }{  1 - P^{f^{\star}, a^{\star}} } - 1 \right) \notag \\
&\quad = \displaystyle\sup_{\substack{   b_1 , b_2 \geq 0 \\ b_1 + b_2 \leq w \\ d_1^{\star}, d_2^{\star}, b_1 \text{ or } b_2 > 0}} \left\{ b_1   \left( \displaystyle\frac{ \kappa \, \mathds{1}_{\left\{ P^{f^{\star}, b }\ne 0 \right\}} \,  q }{ P^{f^{\star}, b }} - 1 \right) + b_2  \left( \displaystyle\frac{ \kappa \, \mathds{1}_{ \left\{  P^{f^{\star}, b } \ne 1  \right\} } \,  \left( 1 -q \right) }{1 - P^{f^{\star}, b} } - 1 \right)  \right\} . 
\end{align*}
\end{enumerate}
\end{defn}

\ref{NE def orig i} formally excludes the case in which the total amount wagered is zero. \ref{NE def orig ii} and \ref{NE def orig iii} ensure that each player maximizes her expected profit according to her beliefs, given her opponents' wagers.\footnote{In \ref{NE def orig ii}, we determine the equilibrium wagers for all diffuse bettors whose beliefs are indexed by $p$; however, a single diffuse bettor with these beliefs solves the same maximization problem.}

First, observe that each player requires very little information about her opponents' strategies. For a given diffuse bettor, knowing $P^{f^{\star}, a^{\star}}$ alone is enough. The atomic player must be able to compute $P^{f^{\star}, b}$ for all of her feasible strategy profiles $b$, so it is sufficient for her to know $d_1^{\star}$ and $d_2^{\star}$. The difference for the two kinds of players reflects that an individual diffuse player cannot affect the implied probability of Outcome 1, while the atomic player can. These remarks explain why we informally claim that only the atomic player and aggregations of diffuse players affect the other participants. Notice that each player's strategy depends {\it anonymously} on her opponents' bets:  how, specifically, her opponents wagers produced $P^{f^{\star}, a^{\star}}$, $d_1^{\star}$, and $d_2^{\star}$ is irrelevant.

Although we only optimize over $b$ in \ref{NE def orig iii}, the apparent possibility that $d_1^{\star} =  d_2^{\star} =0$ motivates our use of the extra constraint 
\begin{equation*}
d_1^{\star}, d_2^{\star}, b_1 \text{ or } b_2 > 0 .
\end{equation*}
One might be concerned that we do not consider the feasible strategy $b_1 = b_2 = 0$ for the atomic player, if $d_1^{\star} =  d_2^{\star} =0$. Recall that all players receive a payoff of zero in such a situation. A simple calculation shows that the supremum is then also zero, so no issue is caused by our omission.

The last important concept for our modeling framework is uniqueness.

\begin{defn}\label{uniq equilib def}
A pure-strategy Nash equilibrium $\left( f^\star , a^\star \right)$ is {\it unique} if for any other pure-strategy Nash equilibrium $\left( f^\diamond , a^\diamond \right)$, we have $f^{\star} = f^{\diamond}$ $\mu$-a.s. and $a^{\star}=a^{\diamond}$. 
\end{defn}

We allow $f^{\star}$ and $f^{\diamond}$ to disagree on a set of $\mu$-measure zero since, ultimately, all relevant equilibrium quantities such as the implied probabilities are unaffected by such a difference. Soon, we see that $f^{\star} \left( p \right)$ and $f^{\diamond} \left( p \right)$ must be equal for all but two points, at most. There is only uncertainty about the behavior of the diffuse bettors who believe that their expected profits are zero (cf. our discussion about our space of feasible strategy profiles).

\section{Theoretical Results}\label{main result sect}

We now state and prove\footnote{We describe the ideas underlying all of our proofs in Section \ref{main result sect}; however, we delay our formal arguments for Proposition \ref{atom equilib bets prop} and Theorem \ref{main thm} until Appendices \ref{prop 2 proof app} and \ref{main thm proof app}, respectively.} our theoretical results, beginning with Propositions \ref{diff equilib bets prop} and \ref{atom equilib bets prop}. The former describes how the diffuse players should wager in response to the atomic player's strategy. The latter tells us how the atomic player should bet, given the diffuse players' wagers. We use these observations to prove Theorem \ref{main thm}, our main result.\footnote{Roughly, this suggests that our large generalized game can almost be viewed as a game with two players: the mean-field of diffuse players and the atomic player.} Recall that it offers necessary and sufficient conditions for the existence and uniqueness of a pure-strategy Nash equilibrium.
 
We conclude Section \ref{main result sect} with Corollaries \ref{atom pl exp prof pos cor}, \ref{reg equilib cor}, and \ref{impl prob kappa  0.5}. Corollary \ref{atom pl exp prof pos cor} says that the atomic player wagers on a particular outcome if and only if the final expected profit per unit bet on that outcome is positive. The next corollary states that our equilibria are regular in a particular sense, while Corollary \ref{impl prob kappa  0.5} says that the implied probability of Outcome 1 tends to $ 0.5$ uniformly as the house take approaches $50 \%$.

\begin{prop}\label{diff equilib bets prop}
Let $\left( f , a \right)$ be a feasible strategy profile such that $d_1$, $d_2 > 0$. $f$ satisfies 
\begin{align}\label{max problem prop 1}
&f_1  \left( p \right)  \left( \displaystyle\frac{ \kappa \,  p }{ P^{f , a }} - 1 \right) + f_2  \left( p \right)  \left( \displaystyle\frac{ \kappa \,  \left( 1 - p \right) }{  1 - P^{f , a } } - 1 \right) \notag \\
&\quad = \displaystyle\sup_{\substack{   b_1 , b_2 \geq 0 \\ b_1 + b_2 \leq 1 }} \left\{ b_1   \left( \displaystyle\frac{ \kappa  \,  p }{ P^{f , a }} - 1 \right) + b_2  \left( \displaystyle\frac{ \kappa \,  \left( 1 - p \right) }{1 - P^{f , a } } - 1 \right)  \right\} 
\end{align}
for all $p \in \left[ 0 , 1 \right]$ if and only if 
\begin{align*}
f_1 \left( p \right) &= \left\{ \begin{array}{cc}
1 & \quad \text{if } \, \,  p > P^{f,a}  / \kappa \\
& \\
0 & \quad \text{if } \, \, p < P^{f,a}  / \kappa \\
\end{array}   \right. \qquad \qquad
f_2 \left( p \right) = \left\{ \begin{array}{cc}
1 & \quad \text{if } \, \, 1 - p  > \left( 1 -  P^{f,a} \right) / \kappa  \\
&\\
0 & \quad \text{if } \, \, 1 - p  < \left( 1 -  P^{f,a} \right) / \kappa \\
\end{array} \right. .
\end{align*}
\end{prop}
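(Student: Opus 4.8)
The plan is to reduce the statement to a pointwise analysis of the linear program on the right-hand side of (\ref{max problem prop 1}). Since $d_1,d_2>0$ and $a_1,a_2\ge 0$, we have $d_i+a_i>0$ for $i=1,2$, so $P^{f,a}\in(0,1)$ (which is why no indicator functions appear in (\ref{max problem prop 1})). Writing $c_1(p)=\kappa p/P^{f,a}-1$ and $c_2(p)=\kappa(1-p)/(1-P^{f,a})-1$, the maximand in (\ref{max problem prop 1}) is, for each fixed $p$, the linear functional $(b_1,b_2)\mapsto b_1c_1(p)+b_2c_2(p)$ on the simplex $\Delta=\{(b_1,b_2)\in\mathbb{R}^2_{\ge 0}:b_1+b_2\le 1\}$, so its maximum is attained at a vertex $(0,0)$, $(1,0)$ or $(0,1)$. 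Note $c_1(p)>0\iff p>P^{f,a}/\kappa$ and $c_2(p)>0\iff 1-p>(1-P^{f,a})/\kappa$, which is exactly the threshold structure appearing in the statement.

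The one structural input I need is that $c_1(p)$ and $c_2(p)$ are never both nonnegative: $c_1(p)\ge 0$ and $c_2(p)\ge 0$ would force $P^{f,a}/\kappa\le p\le 1-(1-P^{f,a})/\kappa$, and this interval is empty precisely because $\kappa<1$. Hence $c_1(p)\ge 0$ implies $c_2(p)<0$, and vice versa. With this in hand the linear program is immediate: if $c_1(p)>0$ the unique maximizer on $\Delta$ is $(1,0)$ (since $c_2(p)<0$ forces $b_2=0$, and then $c_1(p)>0$ forces $b_1=1$); symmetrically, if $c_2(p)>0$ the unique maximizer is $(0,1)$; and if $c_1(p)\le 0$ and $c_2(p)\le 0$, the maximum value is $0$, attained by exactly those $(b_1,b_2)\in\Delta$ with $b_1c_1(p)=b_2c_2(p)=0$.

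For the ``only if'' direction I apply this at each $p$ to the vector $(f_1(p),f_2(p))\in\Delta$, which by hypothesis maximizes (\ref{max problem prop 1}). If $c_1(p)>0$ then $(f_1(p),f_2(p))=(1,0)$; if $c_1(p)<0$ then any maximizer has $b_1=0$ (replacing $b_1$ by $0$ strictly increases the objective), so $f_1(p)=0$; and symmetrically for $f_2(p)$. Since $c_1(p)>0$ also entails $c_2(p)<0$, the conclusion $f_2(p)=0$ obtained in that case is consistent with the claimed formula, and the only points at which the formula leaves $f_1(p)$ (resp.\ $f_2(p)$) unspecified are the single points where $c_1(p)=0$ (resp.\ $c_2(p)=0$). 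For the ``if'' direction I run this in reverse: assuming $f$ has the displayed form --- which, by the incompatibility of $c_1(p)>0$ and $c_2(p)>0$, never demands $f_1(p)=f_2(p)=1$ and so is consistent with $f_1+f_2\le 1$ --- I check case by case on the sign of $c_1(p)$, and when $c_1(p)\le 0$ also on that of $c_2(p)$, that $f_1(p)c_1(p)+f_2(p)c_2(p)$ equals the maximum value computed above; on the ``free'' boundary sets the relevant coefficient vanishes, so the value is still the maximum $0$.

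I do not anticipate a real obstacle: the argument is a routine vertex analysis of a two-dimensional linear program performed separately for each $p$. The only steps needing a little care are the use of $\kappa<1$ to exclude $c_1(p)>0$ and $c_2(p)>0$ simultaneously --- which is also what makes the threshold description internally consistent with the feasibility constraint --- and the bookkeeping of the degenerate cases $c_1(p)=0$ and $c_2(p)=0$, which is exactly what dictates that the characterization may leave $f_1$ and $f_2$ undetermined at (at most) one point apiece; measurability of $f$ plays no role beyond being part of the standing notion of a feasible strategy profile.
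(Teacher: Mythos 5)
Your proposal is correct and takes essentially the same approach as the paper: a pointwise analysis of the linear program over the simplex, with the key input being that $\kappa<1$ precludes the two expected-profit-per-unit-bet coefficients from being simultaneously nonnegative (the paper's inequality (\ref{fi star nonoverlap eqn})). The paper leaves the vertex analysis and degenerate-case bookkeeping implicit as "little to formalize," whereas you spell them out, but the argument is the same.
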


First, notice that $ P^{f,a} \in \left( 0 ,1 \right)$ since both $d_1$ and $d_2$ are positive. Comparing (\ref{max problem prop 1}) and \ref{NE def orig ii} of Definition \ref{maj min NE def}, we find that Proposition \ref{diff equilib bets prop} provides a simple characterization of the diffuse players' equilibrium strategies in this case, given the bets of the atomic player. Our assumption is not too restrictive, as Step \ref{step 1 orig} of Theorem \ref{main thm}'s proof says that $d_1$ and $d_2$ are always both positive in equilibrium.

Three distinct groups of diffuse bettors emerge: The first group, containing the diffuse bettors who believe that Outcome 1 will occur with probability greater than $P^{f, a}/ \kappa$, wager all of their initial wealth on Outcome 1. Diffuse bettors who think that Outcome 2 will occur with probability greater than $\left( 1 - P^{f, a}\right) / \kappa$ make up the second group. These players bet their entire fortunes on Outcome 2. The remaining diffuse players, except those whose beliefs are indexed by $p = P^{f, a}/ \kappa$ or $\left( 1 - P^{f, a}\right) / \kappa$, do not wager at all. None of the groups overlap, since $0 < \kappa < 1$ implies that 
\begin{equation}\label{fi star nonoverlap eqn}
  1 - \left( \displaystyle\frac{ 1 -  P^{f, a}}{ \kappa} \right) <  \displaystyle\frac{P^{f, a}}{\kappa} . 
\end{equation}

The proof's underlying intuition is easy to explain. It is equivalent to show that $f$ satisfies (\ref{max problem prop 1}) on $\left[ 0 , 1 \right]$ if and only if
\begin{align*}
f_1 \left( p \right) &= \left\{ \begin{array}{cc}
1 & \quad \text{if } \, \,  \frac{ \kappa   p }{ P^{f, a}} - 1 > 0\\
& \\
0 & \quad \text{if } \, \, \frac{ \kappa   p }{ P^{f, a}} - 1 < 0\\
\end{array}   \right. \qquad \qquad
f_2 \left( p \right) = \left\{ \begin{array}{cc}
1 & \quad \text{if } \, \,  \frac{ \kappa \left( 1 - p \right) }{  1 - P^{f, a} } - 1 > 0 \\
&\\
0 & \quad \text{if } \, \, \frac{ \kappa \left( 1 - p \right) }{  1 - P^{f, a} } - 1 < 0 \\
\end{array} \right. .
\end{align*}
The terms
\begin{equation*}
\displaystyle\frac{ \kappa   p }{ P^{f, a}} - 1 \qquad \text{and} \qquad  \displaystyle\frac{ \kappa   \left( 1 - p \right) }{  1 - P^{f, a} }   -1
\end{equation*}
describe the expected profit per unit bet on Outcomes 1 and 2, respectively, from the perspective of the diffuse player who believes that Outcome 1 will occur with probability $p$. Individual diffuse players are risk-neutral and do not affect these quantities, so they wager on an outcome only if the corresponding term is positive. For a particular diffuse player, this is true of at most one outcome as $\kappa \in \left( 0 , 1 \right)$. Consequently, if a diffuse player has identified a profitable wagering opportunity, she bets her entire fortune on it.

Despite their large space of feasible strategies, the diffuse players, aside from those whose beliefs are indexed by $p = P^{f, a}/ \kappa$ or $\left( 1 - P^{f, a}\right) / \kappa$, wager either $100\%$ or $0\%$ of their wealth on each outcome. The value of $f_i$ at $P^{f, a} / \kappa$ and $\left( 1 -  P^{f,a} \right) / \kappa$ is ambiguous because, if a given diffuse player's expected profit per unit bet on Outcome $i$ is zero, then she is indifferent to the size of her bet on Outcome $i$.\footnote{Since $\mu$ has a density, the $\mu$-measure of a set with two points is zero. It follows that this ambiguity has no bearing on an equilibrium's uniqueness. A more serious concern is that we could have feasible strategy profiles satisfying Definition \ref{maj min NE def} with different implied probabilities. Precluding this possibility is a key part of Theorem \ref{main thm}'s proof.}

Though their setups differed from our own (see Sections \ref{lit review sect} - \ref{model sect}), Ottaviani, S\o renson, and Watanabe found similar groupings of diffuse players in equilibrium (\cite{ottaviani2006timing}; \cite{Takahiro199785}). We return to this observation during our discussion of Corollary \ref{reg equilib cor}, which roughly says that these groupings persist even when we take into account the atomic player's wagers.

\begin{proof}

There is little to formalize beyond our heuristic discussion above. We only comment that rearranging (\ref{fi star nonoverlap eqn}) shows that we can never have both 
\begin{equation*}
 \displaystyle\frac{ \kappa   p }{ P^{f, a}} - 1  > 0 \qquad \text{and} \qquad \displaystyle\frac{ \kappa  \left( 1 - p \right) }{ 1 - P^{f, a} } - 1  > 0 . 
\end{equation*}

\end{proof}

\begin{prop}\label{atom equilib bets prop}
Let $\left( f , a \right)$ be a feasible strategy profile such that $d_1$, $d_2 > 0$. Consider the equation
\begin{align}\label{atom max prob eqn prop}
& a_1   \bigg( \displaystyle\frac{ \kappa  \,  q }{ P^{f , a }} - 1 \bigg) + a_2  \left( \displaystyle\frac{ \kappa  \,  \left( 1 - q\right) }{  1 - P^{f , a } } - 1 \right) \notag \\
&\quad = \displaystyle\sup_{\substack{   b_1 , b_2 \geq 0 \\ b_1 + b_2 \leq w }} \left\{ b_1   \bigg( \displaystyle\frac{ \kappa  \,  q }{ P^{f , b }} - 1 \bigg) + b_2  \left( \displaystyle\frac{ \kappa \,   \left( 1 -q \right) }{1 - P^{f , b} } - 1 \right)  \right\} . 
\end{align}
\begin{enumerate}[label=(\roman*)]
\item \label{atom bet prop orig i} When
\begin{equation}\label{atom bets out1 ineq alone}
q > \displaystyle\frac{ d_1 }{\kappa \left( d_1 + d_2 \right)} ,
\end{equation}
$a$ satisfies (\ref{atom max prob eqn prop}) if and only if $a_2 = 0$ and 
\begin{align}\label{atom bets out1 soln alone}
 a_1 &= \displaystyle\min \left\{ w ,  \, \displaystyle\sqrt{ \displaystyle\frac{ \kappa  q d_1 d_2}{1 - \kappa q }  } -  d_1  \right\}. 
\end{align}
\item \label{atom bet prop orig ii} When
\begin{equation}\label{atom bets out2 ineq alone}
1 - q > \displaystyle\frac{ d_2 }{\kappa \left( d_1 + d_2 \right)}  , 
\end{equation}
$a$ satisfies (\ref{atom max prob eqn prop})  if and only if $a_1 = 0$ and 
\begin{align}\label{atom bets out2 soln alone}
a_2 &= \displaystyle\min \left\{ w ,  \, \sqrt{ \displaystyle\frac{ \kappa  \left( 1 - q \right) d_1 d_2 }{1 - \kappa \left( 1 - q \right) }  } -  d_2  \right\}. 
\end{align}
\item \label{atom bet prop orig iii} When
\begin{equation}\label{atom bets noth ineq alone}
 q \leq \displaystyle\frac{ d_1 }{\kappa \left( d_1 + d_2 \right)}  \qquad \text{and} \qquad  1 - q \leq \displaystyle\frac{ d_2 }{\kappa \left( d_1 + d_2 \right)}, 
\end{equation}
$a$ satisfies (\ref{atom max prob eqn prop}) if and only if $a_1 = a_2 = 0$.
\end{enumerate}
\end{prop}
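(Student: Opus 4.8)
The plan is to rewrite the atomic player's objective in terms of the total amount wagered and the implied probability it induces; this turns the main difficulty into a one-line monotonicity observation and reduces the rest to a one-dimensional concave optimization on each coordinate axis.

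First I would fix a feasible $b = (b_1, b_2)$, put $T = d_1 + d_2 + b_1 + b_2$ and $P = P^{f,b}$ (see Definition \ref{impl prob defn}), and check by direct algebra---using $d_1 + b_1 = PT$ and $d_2 + b_2 = (1-P)T$---that
\begin{align*}
b_1 \left( \frac{\kappa q}{P^{f,b}} - 1 \right) + b_2 \left( \frac{\kappa (1-q)}{1 - P^{f,b}} - 1 \right) = (\kappa - 1) T + (d_1 + d_2) - \kappa \left( \frac{q d_1}{P} + \frac{(1-q) d_2}{1 - P} \right) .
\end{align*}
Because $d_1, d_2 > 0$ keep $P^{f,b}$ and $1 - P^{f,b}$ inside a fixed compact subinterval of $(0,1)$ as $b$ ranges over the compact feasible set, the objective in (\ref{atom max prob eqn prop}) is continuous, so its supremum is attained.

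The key step---that the atomic player never wagers a positive amount on both outcomes---is then immediate. In $(T, P)$ coordinates the feasible set is $\{ d_1 + d_2 \leq T \leq d_1 + d_2 + w, \ d_1 / T \leq P \leq 1 - d_2 / T \}$; the equalities $P = d_1/T$ and $P = 1 - d_2/T$ correspond to $b_1 = 0$ and $b_2 = 0$, and $T = d_1 + d_2$ forces $b_1 = b_2 = 0$. Since the right-hand side of the identity above is strictly decreasing in $T$ for fixed $P$, a maximizer with $b_1, b_2 > 0$ (so $T > d_1 + d_2$ and $d_1/T < P < 1 - d_2/T$ strictly) could be strictly improved by decreasing $T$ slightly while holding $P$ fixed---a contradiction. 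Hence every maximizer lies on $\{ b_1 = 0 \}$ or $\{ b_2 = 0 \}$.

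It then remains to optimize on each axis and compare. On $\{ b_2 = 0 \}$ the objective is $g(b_1) := (\kappa q - 1) b_1 + \kappa q d_2 \, b_1 / (d_1 + b_1)$, which is strictly concave once $q > 0$, with $g'(0) = \kappa q (d_1 + d_2)/d_1 - 1$; thus $g'(0) > 0$ precisely when (\ref{atom bets out1 ineq alone}) holds, in which case solving $g'(b_1) = 0$ gives the unconstrained maximizer $\sqrt{\kappa q d_1 d_2 / (1 - \kappa q)} - d_1$, which (\ref{atom bets out1 ineq alone}) forces to be positive, so concavity yields the constrained maximizer (\ref{atom bets out1 soln alone}) with a strictly positive value; and when (\ref{atom bets out1 ineq alone}) fails, $g$ is strictly decreasing on $[0, w]$, so its unique maximizer is $b_1 = 0$ with value $0$. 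The axis $\{ b_1 = 0 \}$ is symmetric, with (\ref{atom bets out2 ineq alone}) and (\ref{atom bets out2 soln alone}) replacing (\ref{atom bets out1 ineq alone}) and (\ref{atom bets out1 soln alone}). Finally, (\ref{atom bets out1 ineq alone}) and (\ref{atom bets out2 ineq alone}) cannot both hold (adding them forces $\kappa > 1$), so exactly one of \ref{atom bet prop orig i}, \ref{atom bet prop orig ii}, \ref{atom bet prop orig iii} occurs; comparing the two axis-maxima case by case---one axis gives a strictly positive value and the other gives $0$ in \ref{atom bet prop orig i} and \ref{atom bet prop orig ii}, while both give $0$, attained only at the origin, in \ref{atom bet prop orig iii}---pins down the unique global maximizer and finishes the proof. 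The only genuine obstacle is finding the reparametrization in the first step; after that, the argument is routine.
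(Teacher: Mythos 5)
Your proposal is correct, and it takes a genuinely different route from the paper's. The paper proves only part \ref{atom bet prop orig i} in detail: it invokes Isaacs' result that the unconstrained profit function $\Phi$ has a unique global maximum on $\mathbb{R}^2_{\geq 0}$ at $\left( \sqrt{\kappa q d_1 d_2 / \left( 1 - \kappa q \right)} - d_1 , 0 \right)$, which settles the case $w \geq b_1^{\star}$ outright, and then handles the budget-constrained case $w < b_1^{\star}$ by sign analysis of $\partial_{b_1} \Phi$ and $\partial_{b_2} \Phi$ (using $\partial_{b_1 b_1} \Phi < 0$, $\partial_{b_2 b_1} \Phi > 0$, $\partial_{b_2 b_2} \Phi \leq 0$ to propagate the signs from the origin and from $\left( b_1^{\star} , 0 \right)$); part \ref{atom bet prop orig iii} is attributed to Isaacs and part \ref{atom bet prop orig ii} to symmetry. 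You instead change variables to the total pool $T$ and the implied probability $P$, verify the identity expressing the objective as $\left( \kappa - 1 \right) T$ plus a function of $P$ alone, and observe that strict monotonicity in $T$ at fixed $P$ kills any maximizer with $b_1 , b_2 > 0$; the remaining one-dimensional concave problems on each axis then recover (\ref{atom bets out1 soln alone}) and (\ref{atom bets out2 soln alone}) directly, including the truncation at $w$, and the comparison of axis values handles all three cases uniformly. Your argument is self-contained (no appeal to Isaacs), treats the constrained and unconstrained regimes in one stroke, and makes the economically meaningful point that adding money to the pool without moving the odds is always a pure loss of $\left( 1 - \kappa \right)$ per unit; the paper's approach, by contrast, leans on a known result to shorten the write-up and yields the explicit second-derivative computations that it reuses in its discussion. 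Both establish the same uniqueness of the maximizer needed for the ``if and only if.''
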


As in our discussion of the last result, $P^{f,a} \in \left( 0 ,1 \right)$ and that we only study the case in which $d_1$, $d_2 > 0$ does not matter. Proposition \ref{atom equilib bets prop} can be interpreted as the complement of Proposition \ref{diff equilib bets prop}: It characterizes the atomic player's equilibrium strategy, given the diffuse players' bets.

A calculation similar to that in (\ref{fi star nonoverlap eqn}) shows that (\ref{atom bets out1 ineq alone}) and (\ref{atom bets out2 ineq alone}) never hold simultaneously. Notice that $a_1 > 0$ under \ref{atom bet prop orig i}, while $a_2 > 0$ under \ref{atom bet prop orig ii}. For example, (\ref{atom bets out1 ineq alone}) implies that 
\begin{equation}\label{1 bc impli}
 \displaystyle\frac{ d_1 }{d_1 + d_2 } < \kappa q \qquad \text{and} \qquad  \displaystyle\frac{ d_2 }{d_1 + d_2 } > 1- \kappa q. 
\end{equation}
In this case, 
\begin{equation*}
a_1 = \displaystyle\min \left\{ w ,  \, \displaystyle\sqrt{ \displaystyle\frac{ \kappa  q d_1 d_2}{1 - \kappa q }  } -  d_1  \right\} \geq \displaystyle\min \left\{ w ,  \, \displaystyle\sqrt{ \left( \displaystyle\frac{d_1}{d_2} \right) d_1 d_2   } -  d_1  \right\} > 0. 
\end{equation*}
Hence, under (i), the atomic player wagers on Outcome 1 alone. The atomic player only bets on Outcome 2 in (ii), while she does not wager at all in (iii). Despite having the opportunity to do so, she never simultaneously wagers on both possibilities. Proposition \ref{diff equilib bets prop} revealed similar behavior for diffuse bettors.

Important ideas in the proofs of Propositions \ref{diff equilib bets prop} and \ref{atom equilib bets prop} are closely related. By rearranging (\ref{atom bets out1 ineq alone}) and (\ref{atom bets out2 ineq alone}), we get 
\begin{equation*}
 \displaystyle\frac{ \kappa \left( d_1 + d_2 \right) q }{ d_1}  - 1 > 0 \qquad \text{and} \qquad  \displaystyle\frac{ \kappa \left( d_1 + d_2 \right) \left( 1 - q \right) }{ d_2}  - 1 > 0 , 
\end{equation*}
respectively. Given the wagers of the diffuse players, the first term describes the expected profit per unit bet on Outcome 1 according to the atomic player. The second term has the analogous interpretation for Outcome 2.

As in our analysis for the diffuse players, the atomic player is risk-neutral and bets only when one of these inequalities holds,\footnote{The outcome, if any, on which the atomic player wagers can be identified based upon her opponents' wagers alone. In particular, this identification can be made without knowledge of the implied probability of Outcome 1. Still, from (\ref{atom max prob eqn prop}), it is clear that the atomic player bets on Outcome $i$ in equilibrium if and only if the expected profit per unit bet on Outcome $i$ is positive. We rigorously prove and discuss this further in Corollary \ref{atom pl exp prof pos cor}.} leading directly to \ref{atom bet prop orig iii}. Isaacs first proved this while modeling parimutuel wagering as the control problem faced by a single risk-neutral atomic player unconstrained by a budget (\cite{isaacs}). We include \ref{atom bet prop orig iii} in Proposition \ref{atom equilib bets prop} merely to assist with our presentation.

Unlike our previous analysis, we cannot conclude that the atomic player bets her entire fortune on an initially profitable wagering opportunity. The reason is simple: the atomic player's choices affect (\ref{payouts per unit bet words}). In fact, all else being equal, the payoffs per unit bet on an outcome decrease as the atomic player raises her wager on that outcome. Balancing the desires to increase her expected profit by betting more and keep her expected profit per unit bet high by betting less leads to (\ref{atom bets out1 soln alone}) and (\ref{atom bets out2 soln alone}), not the all-or-nothing wagers of Proposition \ref{diff equilib bets prop}.

More precisely, if (\ref{atom bets out1 ineq alone}) holds and we relax our wealth constraint, this trade-off makes it optimal for the atomic player to wager 
\begin{equation}\label{isaacs wager outcome 1}
\displaystyle\sqrt{ \displaystyle\frac{ \kappa  q d_1 d_2}{1 - \kappa q }  } -  d_1
\end{equation}
on Outcome 1. This solution was also first discovered by Isaacs (\cite{isaacs}).\footnote{Related expressions are also seen in equilibrium studies with $N$ risk-neutral atomic players constrained to wager a specific total amount (\cite{chadha1996betting}). In an obvious way, Proposition \ref{atom equilib bets prop} fills the small gap between these two settings. Recall that no explicit solutions are available for Cheung's atomic player, who faces a budget constraint like ours but is risk-averse (\cite{cheung}).} The idea behind (\ref{atom bets out1 soln alone}) is then clear: If the atomic player cannot afford to bet (\ref{isaacs wager outcome 1}) on Outcome 1, she instead wagers as much as she can. This seems reasonable, intuitively, since up to (\ref{isaacs wager outcome 1}), the positive impact of raising her Outcome 1 bet on her expected profit should outweigh the negative impact. The interpretation of \ref{atom bet prop orig ii} is similar. We present the formal proof of Proposition \ref{atom equilib bets prop} in Appendix \ref{prop 2 proof app}.

\begin{thm}\label{main thm}
A pure-strategy Nash equilibrium exists if and only if $\kappa >  0.5$.\footnote{In practice, this inequality almost always holds.} When an equilibrium exists, it is unique. 
\end{thm}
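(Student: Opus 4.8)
The plan is to reduce the fixed-point problem to a one-dimensional equation in the implied probability $P := P^{f,a}$ and then analyze that equation carefully. Proposition \ref{diff equilib bets prop} tells me that, given any value $P \in (0,1)$, the diffuse players' aggregate bets are determined (up to the two negligible indifference points): $d_1(P) = \mu(\{p > P/\kappa\})$ and $d_2(P) = \mu(\{1-p > (1-P)/\kappa\}) = \mu(\{p < 1-(1-P)/\kappa\})$. Because $\mu$ has a continuous everywhere positive density, both $d_1(P)$ and $d_2(P)$ are continuous in $P$; moreover $d_1$ is non-increasing and $d_2$ is non-decreasing on the relevant range, and (\ref{fi star nonoverlap eqn}) guarantees the two betting groups never overlap. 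Proposition \ref{atom equilib bets prop} then tells me the atomic player's best response $a(P)$ as a function of $d_1(P), d_2(P)$. Feeding this back into Definition \ref{impl prob defn} gives a self-consistency map $\Phi(P) = \dfrac{d_1(P) + a_1(P)}{\sum_i (d_i(P)+a_i(P))}$, and a pure-strategy Nash equilibrium corresponds exactly to a fixed point $\Phi(P) = P$ (one must also check the excluded null case and that $d_1,d_2>0$ at any fixed point, which is Step \ref{step 1 orig} of the proof). So the whole theorem becomes: $\Phi$ has a fixed point iff $\kappa > 0.5$, and it is unique.

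The steps, in order, would be: (1) Establish that in any equilibrium $d_1, d_2 > 0$ and $P \in (1-\kappa, \kappa)$ — this is where $\kappa > 0.5$ first enters, since the interval $(1-\kappa,\kappa)$ is empty when $\kappa \le 0.5$, immediately giving non-existence; the argument is that if $d_1 = 0$ then every diffuse player with $p>0$ perceives infinite (or at least strictly positive, unbounded) profit per unit on Outcome 1 and wants to bet, a contradiction, and similarly for $d_2$; then $P \in (1-\kappa,\kappa)$ follows from requiring both diffuse groups to be non-empty. (2) Using the explicit $d_i(P)$ and the explicit atomic best response, write $\Phi$ as an honest continuous function on $[1-\kappa, \kappa]$ (the three regimes of Proposition \ref{atom equilib bets prop} — atomic bets on 1, on 2, or nothing, and within the first two the $\min$ with $w$ — partition this interval into finitely many subintervals on each of which $\Phi$ has a closed form, and one checks $\Phi$ matches at the breakpoints). (3) Check the boundary behavior: show $\Phi$ maps $[1-\kappa,\kappa]$ into itself, or more precisely that $\Phi(P) - P$ has the right signs near the endpoints, so that the Intermediate Value Theorem yields a fixed point when $\kappa > 0.5$. (4) Prove uniqueness by showing $P \mapsto \Phi(P) - P$ is strictly decreasing (equivalently $\Phi$ is non-increasing while $P\mapsto P$ is strictly increasing): $d_1$ non-increasing and $d_2$ non-decreasing push $P$ down as $P$ rises, and one checks the atomic correction preserves this monotonicity in each regime and across breakpoints. (5) Conversely, when $\kappa \le 0.5$, invoke Step (1) to conclude no equilibrium exists. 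Finally, translate the unique fixed point $P^\star$ back into a unique $(f^\star, a^\star)$ via Propositions \ref{diff equilib bets prop} and \ref{atom equilib bets prop}, noting the only freedom is at $p = P^\star/\kappa$ and $p = 1-(1-P^\star)/\kappa$, a $\mu$-null set, consistent with Definition \ref{uniq equilib def}.

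The main obstacle I expect is Step (4), the monotonicity argument establishing uniqueness. The map $\Phi$ is piecewise-defined with several regimes, and the atomic player's wager $a_1(P) = \min\{w, \sqrt{\kappa q\, d_1(P) d_2(P)/(1-\kappa q)} - d_1(P)\}$ depends on $P$ through both $d_1$ and $d_2$ in a way that is not obviously monotone — as $P$ increases, $d_1$ falls and $d_2$ rises, so the product $d_1 d_2$ and the combination $\sqrt{d_1 d_2} - d_1$ need a genuine (if elementary) computation to sign its derivative, and one must also handle the switches between the $\min$ branches and between the three best-response regimes without a sign reversal. A secondary subtlety is Step (3): ruling out the possibility that the fixed point "escapes" to a boundary where $d_1$ or $d_2$ would vanish — one must confirm the IVT-produced zero of $\Phi(P)-P$ lies strictly inside $(1-\kappa,\kappa)$, which again leans on the density being strictly positive so that $d_1, d_2$ stay positive on the open interval. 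Everything else — continuity, the closed-form regime expressions, the translation back to strategy profiles — is bookkeeping built directly on the two propositions already proved.
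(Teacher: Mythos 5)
Your proposal is correct and follows essentially the same route as the paper: the paper's Steps \ref{step 3 alt 2}--\ref{step 6 uniq fp} construct exactly your map $\Phi$ (there called the implied probability map $\varphi$), partitioned into the same three atomic-player regimes via thresholds $\bar{p}_1,\bar{p}_2$, and prove it is continuous and decreasing on $\left[1-\kappa,\kappa\right]$ with $\varphi\left(1-\kappa\right)=1$ and $\varphi\left(\kappa\right)=0$, yielding a unique fixed point that is translated back into the unique equilibrium just as you describe. The one spot where your reasoning diverges is the justification that $d_1,d_2>0$ in equilibrium: under the paper's convention the payoff on an outcome nobody wagers on is zero rather than $+\infty$, so the ``unbounded perceived profit'' argument is not directly available, and the paper instead argues through the atomic player's best response (Isaacs) and conditions of Definition \ref{maj min NE def} --- but the conclusion and its role in the proof are identical.
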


The connection between low transaction costs (or large $\kappa$) and the existence of non-trivial equilibria has been observed in other parimutuel wagering studies. For example, a non-trivial equilibrium exists in Watanabe's model only if the house take is sufficiently small (\cite{Takahiro199785}). Ottaviani and S\o renson make a similar observation (\cite{ottaviani2006timing}).

Intuitively, $\kappa >  0.5$ is necessary for the existence of an equilibrium in our framework because there are only two possible outcomes. Suppose that $\left( f^{\star} , a^{\star}   \right)$ is a pure-strategy Nash equilibrium. Regardless of her type, if a player believes that Outcome 1 will occur with probability $p$ and wagers on Outcome 1, it should be true that her final expected profit per unit bet on Outcome 1 is positive:
\begin{equation*}
\displaystyle\frac{ \kappa \,  p }{ P^{f^{\star}, a^{\star}}} - 1 > 0 . 
\end{equation*}
Similarly, if she wagers on Outcome 2, then
\begin{equation*}
 \displaystyle\frac{ \kappa \,   \left( 1 - p \right) }{  1 - P^{f^{\star} , a^{\star} } } - 1 > 0.
 \end{equation*}
 Positive amounts are wagered on both outcomes (see Section \ref{model sect}), implying that  
\begin{equation}\label{both profitable ineq}
\displaystyle\frac{ \kappa }{ P^{f^{\star}, a^{\star}}} - 1 > 0  \qquad \text{and} \qquad \displaystyle\frac{ \kappa   }{  1 - P^{f^{\star} , a^{\star} } } - 1 > 0 .
\end{equation}
Rearranging (\ref{both profitable ineq}) shows that $\kappa >  0.5$. More generally, it is easy to see that in a parimutuel betting game with $n$ outcomes and risk-neutral players who can elect not to bet, $\kappa > 1/n$ is necessary for the existence of an equilibrium.

Parimutuel wagering games in the literature occasionally possess multiple equilibria. For instance, Watanabe et al. adapt the work of Harsanyi and Selten to select one equilibrium out of several that arise in their atomic player model (\cite{wat+non+mori}). As mentioned in Section \ref{model sect}, Watanabe's continuum game model can feature multiple equilibria when $\mu \left( \left\{ p \right\} \right) > 0$  for some fixed $p$. We suspect that our equilibrium would no longer be unique, if we incorporated additional atomic players or relaxed our assumption that $\mu$ had a density, but we leave this issue to a future study.

We break our proof into \ref{step 5 orig} steps. Step \ref{step 1 orig} allows us to use Propositions \ref{diff equilib bets prop} and \ref{atom equilib bets prop}. It says that in an equilibrium, the total amount wagered by the diffuse players on each outcome is always positive. Step \ref{step 2 orig} formalizes our discussion above and shows that $\kappa >  0.5$ is necessary for the existence of an equilibrium.

To finish, we find an equivalent formulation of our original problem. More precisely, after presenting some preliminary notation in Steps \ref{step 3 alt 2} - \ref{step 4 alt 2}, we define our so-called {\it implied probability map} $\varphi$ in Step \ref{step 5 alt 2}. Our work in Steps \ref{step 6 alt varphi dec} - \ref{step 6 uniq fp} shows that this map has a unique fixed-point. Due to its construction, its fixed-point corresponds to a pure-strategy Nash equilibrium and vice versa. We can then conclude that our game has a unique equilibrium when $\kappa >  0.5$ in Steps \ref{step 4 orig} - \ref{step 5 orig}.

Our approach is motivated by the following observation: an equilibrium is essentially determined by the implied probability of Outcome 1. Given this quantity, we immediately recover the diffuse players' wagers from Proposition \ref{diff equilib bets prop}. Technically, we do not know how the diffuse players whose beliefs are indexed by $P^{f^{\star}, a^{\star}}/ \kappa$ or $\left( 1 - P^{f^{\star}, a^{\star}}\right) / \kappa$ behave, but this does not matter. By Proposition \ref{atom equilib bets prop}, we then identify the atomic player's wagers.

This observation leads to the definition of $\varphi$ in Step \ref{step 5 alt 2}. In a certain sense, our recipe is only meaningful at a fixed-point of $\varphi$, which underlies the correspondence just discussed. Our complete proof of Theorem \ref{main thm} can be found in Appendix \ref{main thm proof app}.

We close Section \ref{main result sect} with Corollaries \ref{atom pl exp prof pos cor}, \ref{reg equilib cor}, and \ref{impl prob kappa  0.5}. The first result says that the atomic player wagers on a particular outcome if and only if her final expected profit per unit bet on that outcome is positive. This is fairly obvious from \ref{NE def orig iii} of Definition \ref{maj min NE def}, and we basically assume this to be true during our discussion of Theorem \ref{main thm}.

Still, recall that Proposition \ref{atom equilib bets prop} identifies the outcome, if any, on which the atomic player wagers based upon only the diffuse players' bets. For instance, according to that result, the atomic player wagers on Outcome 1 in an equilibrium $\left( f^{\star} , a^{\star} \right)$ if and only if 
\begin{equation}\label{at bet o1 in equlib star}
q > \displaystyle\frac{ d_1^{\star} }{\kappa \left( d_1^{\star} + d_2^{\star} \right)} . 
\end{equation}
Since $a_1^{\star} > 0$, we might be concerned about the possibility that
\begin{equation*}
\frac{P^{f^{\star},a^{\star}}}{\kappa} \geq q > \displaystyle\frac{ d_1^{\star} }{\kappa \left( d_1^{\star} + d_2^{\star} \right)} .
\end{equation*}
The specific form of $a_1^{\star}$ in (\ref{atom bets out1 soln alone}) ultimately prevents this.

Notice that Proposition \ref{diff equilib bets prop} already {\it almost} implies the corresponding result for diffuse players. We say {\it almost} because of the undetermined behavior of the diffuse bettors whose final expected profit per unit bet on some outcome is 0. From that perspective, the atomic and diffuse players identify profitable wagering opportunities using the same criteria. Strategically, they just differ in how they size their equilibrium wagers.

\begin{cor}\label{atom pl exp prof pos cor}
Suppose that $\left( f^{\star} , a^{\star} \right)$ is an equilibrium. Then $a^{\star}_1 > 0$ if and only if 
\begin{equation*}
 \displaystyle\frac{ \kappa   q }{ P^{f^{\star}, a^{\star}}} - 1  > 0 , 
\end{equation*}
while $a^{\star}_2 > 0$ if and only if 
\begin{equation*}
\displaystyle\frac{ \kappa   \left( 1 - q \right) }{  1 - P^{f^{\star}, a^{\star}} }   -1 >0  . 
\end{equation*}
\end{cor}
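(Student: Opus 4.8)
First I would record the setup: since $\left( f^{\star}, a^{\star} \right)$ is an equilibrium, Step~\ref{step 1 orig} of Theorem~\ref{main thm}'s proof gives $d_1^{\star}, d_2^{\star} > 0$, so Proposition~\ref{atom equilib bets prop} applies to $\left( f^{\star}, a^{\star} \right)$ and $P^{f^{\star}, a^{\star}} \in \left( 0, 1 \right)$; also $\kappa q \le \kappa < 1$, so $1 - \kappa q > 0$. The substitution $q \leftrightarrow 1 - q$, $d_1^{\star} \leftrightarrow d_2^{\star}$, Outcome~$1 \leftrightarrow$ Outcome~$2$ turns each claim of the corollary into the other, so it suffices to treat the two implications involving $a_1^{\star}$ and obtain those for $a_2^{\star}$ by symmetry. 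The one point to watch is that the converse for Outcome~$1$ will use the forward implication for Outcome~$2$, so I would establish both forward implications before either converse.

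For the forward implication, suppose $a_1^{\star} > 0$. Because $a_1 = 0$ in parts~\ref{atom bet prop orig ii} and~\ref{atom bet prop orig iii} of Proposition~\ref{atom equilib bets prop}, we must be in part~\ref{atom bet prop orig i}: $a_2^{\star} = 0$, $q > d_1^{\star} / \left[ \kappa \left( d_1^{\star} + d_2^{\star} \right) \right]$, and $a_1^{\star} = \min\left\{ w, \, \sqrt{ \kappa q\, d_1^{\star} d_2^{\star} / \left( 1 - \kappa q \right) } - d_1^{\star} \right\}$. Since $a_2^{\star} = 0$, the implied probability $P^{f^{\star}, a^{\star}} = \left( d_1^{\star} + a_1^{\star} \right) / \left( d_1^{\star} + a_1^{\star} + d_2^{\star} \right)$ is strictly increasing in $a_1^{\star}$, hence $\kappa q / P^{f^{\star}, a^{\star}} - 1$ is strictly decreasing in $a_1^{\star}$; as $a_1^{\star}$ does not exceed $\sqrt{ \kappa q\, d_1^{\star} d_2^{\star} / \left( 1 - \kappa q \right) } - d_1^{\star}$, it is enough to check positivity when $d_1^{\star} + a_1^{\star}$ equals $\sqrt{ \kappa q\, d_1^{\star} d_2^{\star} / \left( 1 - \kappa q \right) }$. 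This is the one real calculation: substituting should yield
\[
\frac{\kappa q}{P^{f^{\star}, a^{\star}}} - 1 = \left( \kappa q - 1 \right) + \frac{\kappa q\, d_2^{\star}}{d_1^{\star} + a_1^{\star}} = \sqrt{1 - \kappa q}\left( \sqrt{ \kappa q\, d_2^{\star} / d_1^{\star} } - \sqrt{1 - \kappa q} \right) > 0,
\]
the last inequality being merely a restatement of $q > d_1^{\star} / \left[ \kappa \left( d_1^{\star} + d_2^{\star} \right) \right]$. This display is precisely what the text means when it says that ``the specific form of $a_1^{\star}$ in~(\ref{atom bets out1 soln alone}) ultimately prevents'' $P^{f^{\star}, a^{\star}} / \kappa \ge q$, and the monotonicity remark is what lets me avoid a case split over the two branches of the $\min$. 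The same computation with the outcomes swapped shows that $a_2^{\star} > 0$ implies $\kappa \left( 1 - q \right) / \left( 1 - P^{f^{\star}, a^{\star}} \right) - 1 > 0$.

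For the converse, suppose $\kappa q / P^{f^{\star}, a^{\star}} - 1 > 0$, that is, $P^{f^{\star}, a^{\star}} < \kappa q$. If $a_2^{\star} > 0$, the forward implication for Outcome~$2$ gives $1 - P^{f^{\star}, a^{\star}} < \kappa \left( 1 - q \right)$, and adding the two strict inequalities yields $1 < \kappa$, contradicting $\kappa \in \left( 0, 1 \right)$; so $a_2^{\star} = 0$. If in addition $a_1^{\star} = 0$, then $a^{\star} = 0$, whence part~\ref{atom bet prop orig iii} of Proposition~\ref{atom equilib bets prop} forces $q \le d_1^{\star} / \left[ \kappa \left( d_1^{\star} + d_2^{\star} \right) \right]$ and therefore $P^{f^{\star}, a^{\star}} = d_1^{\star} / \left( d_1^{\star} + d_2^{\star} \right) \ge \kappa q$, a contradiction. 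Hence $a_1^{\star} > 0$, and the symmetric argument settles the claim about $a_2^{\star}$. The only genuine obstacle is verifying the displayed identity; everything else is bookkeeping with Proposition~\ref{atom equilib bets prop}.
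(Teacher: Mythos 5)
Your proof is correct and follows essentially the same route as the paper's: the converse is the identical case elimination via Proposition \ref{atom equilib bets prop}, and the forward direction likewise reduces to part \ref{atom bet prop orig i} of that proposition. The only difference is that where the paper appeals to Isaacs' work for the inequality $\kappa q / P^{f^{\star},a^{\star}} - 1 > 0$, you verify it directly (and correctly) by monotonicity in $a_1^{\star}$ together with an explicit evaluation at the unconstrained optimum $\sqrt{\kappa q\, d_1^{\star} d_2^{\star}/\left(1-\kappa q\right)} - d_1^{\star}$, which makes the argument self-contained.
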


\begin{proof}
We present the argument for the first case. The other is similar. 

Recall that $d_1^{\star}$, $d_2^{\star} >0$ (see Step \ref{step 1 orig} of Theorem \ref{main thm}'s proof). Assume that $a_1^{\star} > 0$. According to Proposition \ref{atom equilib bets prop}, (\ref{atom bets out1 ineq alone}) holds and $a_2^{\star} = 0$. Using the notation from Step \ref{step 4 alt 2} of Theorem \ref{main thm}'s proof, 
 \begin{equation*}
 P^{f^{\star}, a^{\star}} \leq  \displaystyle\frac{ \zeta_1 \left(  P^{f^{\star}, a^{\star}}\right)   + d_1^{\star} }{ \zeta_1 \left(  P^{f^{\star}, a^{\star}}\right)  + d_1^{\star}  + d_2^{\star}  } . 
  \end{equation*} 
That 
\begin{equation}\label{atom exp prof pub pos ineq}
 \displaystyle\frac{ \kappa   q }{ P^{f^{\star}, a^{\star}}} - 1  > 0 
\end{equation}
follows from Isaacs' work (\cite{isaacs}).

To prove the remaining direction, assume that (\ref{atom exp prof pub pos ineq}) is satisfied. Exactly one of (\ref{atom bets out1 ineq alone}), (\ref{atom bets out2 ineq alone}), and (\ref{atom bets noth ineq alone}) holds. We cannot have (\ref{atom bets out2 ineq alone}), since it would follow that $a_2^{\star} > 0$. Arguing as we just did, we would get 
\begin{equation*}
\displaystyle\frac{ \kappa   \left( 1 - q \right) }{  1 - P^{f^{\star}, a^{\star}} }   -1 >0  ,
\end{equation*}
which would lead to 
\begin{equation*}
\displaystyle\frac{ \kappa   q }{ P^{f^{\star}, a^{\star}}} - 1  < 0.
\end{equation*} 
(\ref{atom bets noth ineq alone}) cannot hold either, as this would give the contradiction
\begin{equation*}
q \leq \displaystyle\frac{ d_1^{\star} }{  \kappa \left( d_1^{\star} + d_2^{\star}  \right) }  = \frac{P^{f^{\star},a^{\star}} }{\kappa}. 
\end{equation*}
Hence, (\ref{atom bets out1 ineq alone}) holds and $a_1^{\star} > 0$.

\end{proof}

To explain our next result, suppose that some player is wagering on a particular outcome. If another player believes that this outcome will occur with higher probability than the original player, Corollary \ref{reg equilib cor} says that the new player also wagers on the outcome. Recall from Section \ref{lit review sect} that Watanabe called an equilibrium with this property {\it regular} (\cite{Takahiro199785}). All equilibria in Watanabe's framework and Ottaviani and S\o renson's framework are regular (\cite{ottaviani2006timing}; \cite{Takahiro199785}). 

One might suspect that such a result generally holds, but this is not the case (\cite{wat+non+mori}). To the best of our knowledge, regularity has only been consistently observed in the literature when each player's initial wealth is negligible (\cite{ottaviani2006timing}; \cite{Takahiro199785}). Corollary \ref{reg equilib cor} shows that our model is an example of a parimutuel wagering game in which the equilibrium is regular, even when an atomic player is active. It is an immediate consequence of our observation that both atomic and diffuse players decide to wager on some outcome by determining whether or not the final expected profit per unit bet on that outcome is positive.

\begin{cor}\label{reg equilib cor}
An equilibrium $\left( f^{\star} , a^{\star} \right)$ is always regular. 
\end{cor}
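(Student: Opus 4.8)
The plan is to show that regularity follows immediately by combining Proposition~\ref{diff equilib bets prop} (for pairs of diffuse players), Corollary~\ref{atom pl exp prof pos cor} (for the atomic player), and a short calculation that compares the two wagering criteria. The unifying observation is that, in the equilibrium $\left( f^{\star} , a^{\star} \right)$, every player---regardless of type---wagers on Outcome~$1$ precisely when $\kappa p / P^{f^{\star}, a^{\star}} - 1 > 0$, i.e. when $p > P^{f^{\star}, a^{\star}} / \kappa$, and on Outcome~$2$ precisely when $1 - p > \left( 1 - P^{f^{\star}, a^{\star}} \right) / \kappa$. For diffuse players this is exactly the content of Proposition~\ref{diff equilib bets prop} (applicable since $d_1^{\star}, d_2^{\star} > 0$ by Step~\ref{step 1 orig} of Theorem~\ref{main thm}'s proof); for the atomic player it is Corollary~\ref{atom pl exp prof pos cor}, rewritten with $q$ in place of $p$.

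First I would make the threshold formulation precise. Let $t_1 = P^{f^{\star}, a^{\star}} / \kappa$ and $t_2 = 1 - \left( 1 - P^{f^{\star}, a^{\star}} \right) / \kappa$; note $t_2 < t_1$ by the computation~(\ref{fi star nonoverlap eqn}). Then any player with belief parameter $r \in [0,1]$ wagers on Outcome~$1$ in equilibrium iff $r > t_1$, wagers on Outcome~$2$ iff $r < t_2$, and wagers nothing (or is a measure-zero exception at $r = t_1$ or $r = t_2$) when $t_2 < r < t_1$. This covers the atomic player with $r = q$ and every diffuse player with $r = p$.

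Next I would handle the four cases in the statement of regularity. Suppose a player $X$ with belief $r_X$ is wagering on Outcome~$1$, so $r_X > t_1$; if player $Y$ has $r_Y > r_X$, then $r_Y > t_1$ as well, so $Y$ also wagers on Outcome~$1$. Symmetrically, if $X$ is wagering on Outcome~$2$, so $r_X < t_2$, then any $Y$ with $r_Y$ further from $1$ in the appropriate sense---i.e. $1 - r_Y > 1 - r_X$, equivalently $r_Y < r_X$---satisfies $r_Y < t_2$ and hence also wagers on Outcome~$2$. One only has to be slightly careful to phrase ``believes the outcome occurs with higher probability'' correctly for Outcome~$2$ (higher probability of Outcome~$2$ means \emph{lower} $p$), and to note that the ambiguous boundary cases $r = t_1$, $r = t_2$ concern a $\mu$-null set of diffuse players (and a single value of $q$), so they do not affect the regularity assertion, which is about players who actually do wager.

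The only genuine content, and thus the main obstacle if there is one, is establishing that the atomic player obeys the \emph{same} threshold $t_1$ (resp. $t_2$) as the diffuse players---a priori Proposition~\ref{atom equilib bets prop} phrases the atomic player's decision in terms of $d_1^{\star} / \left( \kappa \left( d_1^{\star} + d_2^{\star} \right) \right)$ rather than $P^{f^{\star}, a^{\star}} / \kappa$. But this is exactly what Corollary~\ref{atom pl exp prof pos cor} resolves: $a_1^{\star} > 0 \iff \kappa q / P^{f^{\star}, a^{\star}} - 1 > 0 \iff q > t_1$, and likewise for Outcome~$2$. So once Corollary~\ref{atom pl exp prof pos cor} is in hand, regularity is a one-line monotonicity argument, and I would present it as such.
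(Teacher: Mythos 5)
Your proposal is correct and follows exactly the paper's route: the paper's proof is the one-line observation that, since $d_1^{\star}, d_2^{\star} > 0$ by Step \ref{step 1 orig} of Theorem \ref{main thm}'s proof, regularity follows directly from Proposition \ref{diff equilib bets prop} and Corollary \ref{atom pl exp prof pos cor}. You have simply made explicit the common-threshold monotonicity argument that the paper leaves implicit, including the correct handling of the $\mu$-null boundary cases.
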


\begin{proof}
Since $d_1^{\star}$, $d_2^{\star} >0$ by Step \ref{step 1 orig} of Theorem \ref{main thm}'s proof, the result directly follows from Proposition \ref{diff equilib bets prop} and Corollary \ref{atom pl exp prof pos cor}.

\end{proof}

Section \ref{main result sect}'s last result says that the implied probability of Outcome 1 tends to $ 0.5$ as the house take approaches $ 0.5$, regardless of the other parameters that we choose for our model. In fact, the convergence is uniform.

Initially, this finding may appear rather odd. For example, it is easy to ensure that $P^{f^{\star},a^{\star}}$ lies between $49.9\%$ and $50.1\%$ when $q= 0$ and the $\mu$-mass of $\left[ 0 , 1 \right]$ is {\it almost} entirely concentrated near $p = 0$. If essentially the whole population believes that Outcome 2 is guaranteed to occur, how can the total amounts wagered on each outcome be roughly equal?

Our discussion of Theorem \ref{main thm} outlines the key intuition. Simply notice that instead of rearranging (\ref{both profitable ineq}) to show that $\kappa >  0.5$, we can show that $P^{f^{\star}, a^{\star}} \in \left( 1 - \kappa , \kappa \right)$. This holds even in the extreme scenario where virtually all of the initial wealth is held by those who believe that Outcome 2 is a sure bet. Still, our first instinct has some merit: Here, $P^{f^{\star}, a^{\star} } \approx 1 - \kappa$ for all $\kappa$ (see Appendix \ref{corr app}).

\begin{cor}\label{impl prob kappa  0.5}
Fix $\mu$, $q$, and $w$ and consider the map defined on $\left(  0.5 , 1 \right)$ by
\begin{equation*}
\kappa \mapsto P^{f^{\star}, a^{\star}} .
\end{equation*}
As $\kappa \downarrow  0.5$, the values of the map approach $ 0.5$.

\end{cor}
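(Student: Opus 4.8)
The plan is to reduce the corollary to the a priori localization $P^{f^{\star}, a^{\star}} \in \left( 1 - \kappa , \kappa \right)$, which holds in \emph{every} equilibrium whenever $\kappa >  0.5$, and then apply a squeeze argument. The first step is therefore to have this two-sided bound in hand. One way is simply to cite Step \ref{step 5 orig} of Theorem \ref{main thm}'s proof. If instead one wants to see it directly: by Step \ref{step 1 orig} of that proof, $d_1^{\star}, d_2^{\star} > 0$, so positive amounts are wagered on each outcome. Proposition \ref{diff equilib bets prop} then tells us that the diffuse bettors who wager on Outcome 1 are exactly those with $p > P^{f^{\star}, a^{\star}} / \kappa$ (up to a $\mu$-null set), and this set has positive $\mu$-measure because the density is everywhere positive; for such a bettor the expected profit per unit bet on Outcome 1, namely $\kappa p / P^{f^{\star}, a^{\star}} - 1$, is positive, and since $p \le 1$ this forces $P^{f^{\star}, a^{\star}} < \kappa$. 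The symmetric argument applied to the bettors wagering on Outcome 2 gives $1 - P^{f^{\star}, a^{\star}} < \kappa$, i.e.\ $P^{f^{\star}, a^{\star}} > 1 - \kappa$. Hence $1 - \kappa < P^{f^{\star}, a^{\star}} < \kappa$ for every equilibrium.

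The second step is the squeeze itself. Fix $\mu$, $q$, and $w$. For each $\kappa \in \left(  0.5 , 1 \right)$ the (unique) equilibrium satisfies $1 - \kappa < P^{f^{\star}, a^{\star}} < \kappa$, equivalently $\lvert P^{f^{\star}, a^{\star}} -  0.5 \rvert < \kappa -  0.5$. Since both endpoints $1 - \kappa$ and $\kappa$ converge to $ 0.5$ as $\kappa \downarrow  0.5$, it follows immediately that the value of the map $\kappa \mapsto P^{f^{\star}, a^{\star}}$ converges to $ 0.5$. For the uniformity remark made in the discussion preceding the corollary, one observes that the estimate $\lvert P^{f^{\star}, a^{\star}} -  0.5 \rvert < \kappa -  0.5$ contains no reference to $\mu$, $q$, or $w$, so the convergence holds at the same rate for all admissible choices of these parameters.

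I do not anticipate a genuine obstacle here: the only thing that needs care is the localization bound, and once Step \ref{step 5 orig} of Theorem \ref{main thm}'s proof is invoked (or re-derived as above from Propositions \ref{diff equilib bets prop} and \ref{atom equilib bets prop} together with Step \ref{step 1 orig}), the corollary is a one-line consequence of the squeeze theorem. The role of the atomic player, $q$, and $w$ is entirely irrelevant to the statement, which is precisely the point worth emphasizing.
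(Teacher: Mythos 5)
Your proposal is correct and follows essentially the same route as the paper: the paper's proof likewise reduces the corollary to the bound $P^{f^{\star}, a^{\star}} \in \left( 1 - \kappa , \kappa \right)$ established in Step \ref{step 5 orig} of Theorem \ref{main thm}'s proof (itself derived from Step \ref{step 1 orig} and Proposition \ref{diff equilib bets prop}, exactly as in your re-derivation) and then lets $\kappa \downarrow 0.5$.
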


\begin{proof}
Simply notice that the map is well-defined by Theorem \ref{main thm} and that $P^{f^{\star}, a^{\star}}  \in \left( 1 - \kappa , \kappa \right)$ by Step \ref{step 5 orig} of its proof.

\end{proof}

\section{Numerical Results}\label{leb meas sect}



The theoretical results from Section \ref{main result sect} allow us to return to our central question: How do large-scale participants in parimutuel wagering events affect the house and ordinary bettors? We explore this issue by analyzing several concrete scenarios (see Examples \ref{exmp 1} - \ref{exmp 4}).

We use the house's revenue to quantify the atomic player's impact on the house. Given an equilibrium $\left( f^{\star} , a^{\star} \right)$, the house's revenue is simply the product of the house take and the total amount wagered:
\begin{equation}\label{house rev eqn}
\left( 1 - \kappa \right) \left( d_1^{\star} + d_2^{\star} + a_1^{\star} + a_2^{\star} \right) . 
\end{equation}
Notice that this quantity is deterministic and does not depend on the {\it actual} probability of Outcome 1, as the house collects (\ref{house rev eqn}) regardless of which outcome occurs.

To quantify the atomic player's effect on diffuse bettors, we use one of two quantities. In Examples \ref{exmp 1} - \ref{exmp 2}, we select values for the {\it actual} probability of Outcome 1. Making this choice lets us compute the {\it actual} total expected profit of the diffuse players. If $\left( f^{\star} , a^{\star} \right)$ is an equilibrium and the {\it actual} probability of Outcome 1 is $\bar{p}$, then it is given by
\begin{equation}\label{act exp prof db eqn}
d_1^{\star}  \bigg( \displaystyle\frac{ \kappa  \,  \bar{p} }{ P^{f^{\star}, a^{\star}}} - 1 \bigg) + d_2^{\star} \left( \displaystyle\frac{ \kappa \,   \left( 1 -  \bar{p} \right) }{  1 - P^{f^{\star}, a^{\star}} } - 1 \right) .
\end{equation}

While we use (\ref{act exp prof db eqn}) to describe how the atomic player affects the diffuse players in Examples \ref{exmp 1} - \ref{exmp 2}, we cannot do so in Examples \ref{exmp 3} - \ref{exmp 4}. The reason is that we make no assumption about the {\it actual} probability of Outcome 1 in the latter situations. Instead, we quantify the impact on diffuse bettors using their total {\it subjective} expected profit, which is given by 
\begin{equation}\label{subj exp prof db eqn}
\displaystyle\int_{0}^{1} \left[ f_1^{\star} \left( p \right)  \bigg( \displaystyle\frac{ \kappa \,   p }{ P^{f^{\star}, a^{\star}}} - 1 \bigg) + f_2^{\star} \left( p \right)  \left( \displaystyle\frac{ \kappa \, \left( 1 - p \right) }{  1 - P^{f^{\star}, a^{\star}} } - 1 \right) \right] d \mu \left( p \right)
\end{equation}
in an equilibrium $\left( f^{\star} , a^{\star} \right)$. Here, we merely compute each diffuse player's expected profit according to her beliefs and aggregate the results over all diffuse players.

From Section \ref{intro sect}, recall that the standard narrative says that the presence of the atomic player should increase the house's revenue but decrease the diffuse players' total expected profit ({\it actual} or {\it subjective}, as applicable). The eventual decline in the house's revenue should only be seen over time, not in our static game model.

Technically, since we specified that $w > 0$ in Section \ref{model sect}, the atomic player can never be absent in our framework. Still, we can model the atomic player's absence by choosing an extremely low value for $w$, say $w = 10^{-10}$. Her wagers are then too small to materially affect any equilibrium quantities. We do this for all of the Case 1's in Examiples \ref{exmp 1} - \ref{exmp 4}. The atomic player is present, that is, $w > > 0$, in all of our upcoming Case 2's.

The following collection of measures is convenient for our purposes.
\begin{defn}\label{dens fxn exmp defn}
For $n \geq 1$, let $\mu_n$ be the Borel measure on $\left[ 0 , 1 \right]$ whose density $g_n$ is defined by 
\begin{align*}
g_n \left( p \right) &= \left\{ \begin{array}{cc}
-2 n \left( n - 1 \right) p  + 2 \left( n -1 \right) + 1/n & \quad \text{if } \, \,  p <  1/n \\
1/n & \quad \text{if } \, \, p \geq 1/n   \\
\end{array}   \right. .
\end{align*}
\end{defn}
In Figure \ref{fig: gn varying n plot}, we give the plots of $g_n$ for $n = 1$, 3, and 9. Here are the key observations:
\begin{enumerate}[label=(\roman*)]
\item \label{gn cont pos} $g_n$ is continuous and positive on $\left[ 0 , 1 \right]$.
\item \label{un meas 1} $\mu_n \left( \left[ 0 , 1 \right] \right) = 1$ for all $n$.
\item \label{mu1 is Leb} $\mu_1$ is the Lebesgue measure on $\left[ 0 , 1 \right]$.
\item \label{gn conv Dirac} $g_n$ converges in distribution to the Dirac delta function as $n \uparrow \infty$. 
\end{enumerate}
\ref{gn cont pos} and \ref{un meas 1} ensure that $\mu_n$ is a suitable candidate for the measure describing the initial wealth of the diffuse bettors, that is, all of our results apply when $\mu = \mu_n$. In this case, \ref{mu1 is Leb} says that the initial wealth of the diffuse bettors is uniformly distributed when $n =1$. \ref{gn conv Dirac} says that their initial wealth is increasingly concentrated among those who believe that Outcome 2 will occur with high probability as $n$ increases. Another key feature is that the diffuse bettors' total initial wealth is always equal to 1 (see \ref{un meas 1}). 

\begin{figure}[ht]
\centering
\begin{minipage}[b]{0.45\linewidth}
\includegraphics[scale=0.35]{nWedgeO2FxnPlotVaryN_FINAL}
\caption{}
\label{fig: gn varying n plot}
\end{minipage}
\end{figure}

Before we proceed, we remark that all of our figures are generated with the help of the ideas in Theorem \ref{main thm}'s proof. More precisely, assume that $\kappa \in \left(  0.5 , 1 \right)$. The function $\varphi$ defined in Step \ref{step 5 alt 2} has a unique fixed-point. Since $\varphi$ is also continuous and decreasing, we can efficiently approximate this value with arbitrary precision using binary search. Step \ref{step 4 orig} shows how to reconstruct the pure-strategy Nash equilibrium $\left( f^{\star} , a^{\star} \right)$, given such an estimate.

\begin{example}\label{exmp 1}

We now show that both effects predicted by the usual narrative can be observed.

  In Cases 1 and 2, we set $\mu = \mu_1$ and $q = 0.9$. We assume that the atomic player's beliefs are exactly correct, i.e., the {\it actual} probability that Outcome 1 will occur is also 0.9. The only difference between Cases 1 and 2 is that $w = 10^{-10}$ in the former but $w = 1$ in the latter.

Recall that choosing $\mu = \mu_1$ means that the diffuse bettors' initial wealth is uniformly distributed. Since $q = 0.9$, the atomic player (correctly) believes that Outcome 1 is quite likely.

In Figure \ref{fig: Diff Pl EP Exmp 1}, we plot the diffuse players' {\it actual} total expected profit. Collectively, the diffuse players beliefs are rather inaccurate, so it is not surprising that their expect profit is negative. Still, as $\kappa \uparrow 1$, the diffuse players become increasingly worse off in Case 2. Intuitively, the atomic player quickly raises her bet on Outcome 1 as $\kappa \uparrow 1$. The implied probability of Outcome 1 rises,\footnote{We remark that $P^{f^{\star}, a^{\star}} \in \left[ 0.5, 0.7 \right]$ for all $\kappa \in \left( 0.5 , 1 \right)$ in Example \ref{exmp 1}. In particular, the {\it favorite-longshot bias} results, regardless of the house take.} causing more diffuse players to bet on Outcome 2 and less to bet on Outcome 1. Since the {\it actual} probability of Outcome 1 is 0.9, this transition negatively affects the diffuse players.

We plot the house's revenue in Figure \ref{fig: House Rev Exmp 1}. The house's revenue is higher in Case 2 than in Case 1 for all $\kappa \in \left( 0.5, 1 \right)$, as a result of the higher wagering totals in Case 2.


\begin{figure}[ht]
\centering
\begin{minipage}[b]{0.45\linewidth}
\includegraphics[scale=0.35]{DiffPlay_ACTUALTotExpProf_FxnKappa_DiffAtom_EXMP1_FINAL}
\caption{}
\label{fig: Diff Pl EP Exmp 1}
\end{minipage}
\qquad \quad
\begin{minipage}[b]{0.45\linewidth}
\includegraphics[scale=0.35]{HouseRev_FxnKappa_DiffAtom_EXMP1_FINAL}
\caption{}
\label{fig: House Rev Exmp 1}
\end{minipage}
\end{figure}

\end{example}

\begin{example}\label{exmp 2}

Still, there are cases in which diffuse players are positively affected by the activities of the atomic player.

We now choose $\mu = \mu_1$ and $q = 0.57$ for Cases 1 and 2. The {\it actual} probability of Outcome 1 is 0.47. The only difference between the two scenarios is that $w = 10^{-10}$ in Case 1, while $w = 1$ in Case 2. 

Compared to Example \ref{exmp 1}, Outcome 1 is slightly less likely here. Also, we still assume that the atomic player's forecast is quite accurate, but her prediction is no longer perfect.

We plot the diffuse players' {\it actual} expected profit in Figure \ref{fig: Diff Pl EP Exmp 2}. The graphs for Cases 1 and 2 are the same for most values of $\kappa$; however, we see that the diffuse players' expected profit is {\it higher} in Case 2 when $\kappa$ is large enough. Roughly, the atomic player does not start betting (on Outcome 1) until the house take is low, since she is nearly ambivalent. This explains why the curves are initially identical. When the atomic player begins to wager on Outcome 1, the diffuse players reshuffle their bets as in Example \ref{exmp 1}.\footnote{Like Example \ref{exmp 1}, we see the {\it favorite-longshot bias} here because $P^{f^{\star}, a^{\star}} \in \left[ 0.5, 0.53 \right]$ for all $\kappa \in \left( 0.5 , 1 \right)$.} The shift benefits them, essentially because the atomic player wagers on the {\it wrong} outcome.

In Figure \ref{fig: House Rev Exmp 2}, we plot the house's revenue. We see an extremely small improvement in Case 2 when $\kappa$ is large, but the graphs are almost indistinguishable, visually. As in Example \ref{exmp 1}, the increase corresponds to the increase in the total amount wagered. It is slight, as the atomic player's uncertainty about what will occur causes her to bet very little in Case 2, even for $\kappa \approx 1$.


\begin{figure}[ht]
\centering
\begin{minipage}[b]{0.45\linewidth}
\includegraphics[scale=0.35]{DiffPlay_ACTUALTotExpProf_FxnKappa_DiffAtom_EXMP2_FINAL}
\caption{}
\label{fig: Diff Pl EP Exmp 2}
\end{minipage}
\qquad \quad
\begin{minipage}[b]{0.45\linewidth}
\includegraphics[scale=0.35]{HouseRev_FxnKappa_DiffAtom_EXMP2_FINAL}
\caption{}
\label{fig: House Rev Exmp 2}
\end{minipage}
\end{figure}

\end{example}

\begin{example}\label{exmp 3}

We can argue that the diffuse players are better off in the presence of the atomic player, even without making assumptions about the {\it actual} probability of Outcome 1.

In Cases 1 and 2, we now choose $\mu = \mu_{10}$ and $q = 0.95$. As in Examples \ref{exmp 1} - \ref{exmp 2}, we set $w = 10^{-10}$ in Case 1 and $w = 1$ in Case 2. The atomic player believes that Outcome 1 is highly likely. Collectively, the diffuse players believe that Outcome 2 will probably happen: over $90\%$ of their initial wealth is held by those who believe that the probability of Outcome 2 is at least 0.9. We make no judgment about the accuracy of the players' beliefs.

We plot the diffuse players' {\it subjective} expected profit in Figure \ref{fig: Diff Pl EP Exmp 3}. The graphs are the same for low $\kappa$, but eventually, the diffuse players' {\it subjective} expected profit is much higher in Case 2. Intuitively, the atomic player raises her wager on Outcome 1 as $\kappa \uparrow 1$, since she believes that Outcome 1 will occur. The implied probability of Outcome 1 then rises, a boon to those who believe that Outcome 2 will occur. This includes {\it most} of the diffuse players.

In Figure \ref{fig: House Rev Exmp 3}, we plot the house's revenue. The house's revenue in Case 2 is at least as large as its revenue in Case 1 for all values of $\kappa$. Often, the improvement is significant. Roughly, there is too much agreement among the diffuse bettors. They wager more in the presence of the atomic player, {\it believing} that they can profit from her {\it supposed} wagering mistakes. The pool size increases, leading to greater revenue for the house in Case 2.


\begin{figure}[ht]
\centering
\begin{minipage}[b]{0.45\linewidth}
\includegraphics[scale=0.35]{DiffPlay_TotExpProf_FxnKappa_DiffAtom_EXMP3_FINAL}
\caption{}
\label{fig: Diff Pl EP Exmp 3}
\end{minipage}
\qquad \quad
\begin{minipage}[b]{0.45\linewidth}
\includegraphics[scale=0.35]{HouseRev_FxnKappa_DiffAtom_EXMP3_FINAL}
\caption{}
\label{fig: House Rev Exmp 3}
\end{minipage}
\end{figure}

\end{example}

\begin{example}\label{exmp 4}

In keeping with the standard narrative, it seems like the house is always immediately better off because of the atomic player. We can cast some doubt on this too.

We used the same $\mu$ for Cases 1 and 2 in Examples \ref{exmp 1} - \ref{exmp 3}. This choice captures the idea that the presence of the atomic player should not affect the diffuse players' initial wealth in a static parimutuel wagering game.\footnote{Over the course of many events, diffuse players may stop participating because of the atomic player, making this intuition questionable in another context (see Section \ref{intro sect}).}

Alternatively, it might be reasonable to fix the distribution of initial wealth across the {\it entire} population, not just the diffuse population. For instance, we could specify that the amount held by those who believe that Outcome 1 will occur is roughly the same as the amount held by those who believe that Outcome 2 will occur. We could then compare the situations in which the wealth is held by only diffuse players and in which some wealth is held by the atomic player. This is the approach we now take.

In Case 1, $w = 10^{-10}$ and the density of $\mu$ is defined by  
\begin{equation*}
p \mapsto \frac{ g_{100} \left(p \right) + g_{100} \left(1 -p \right)  }{2} .
\end{equation*}
For Case 2, we set $w  = 1$ and $\mu = \mu_{100}$. We choose $q = 1$ for both scenarios and, again, make no assumption about Outcome 1's {\it actual} probability.

Intuitively, {\it half} of the diffuse players in Case 1 believe that Outcome 1 is going to occur, while the other {\it half} believes that Outcome 2 will occur. For Case 2, the diffuse players {\it all} believe that Outcome 2 is going to occur. The atomic player, whose wealth is equal to the collective wealth of the diffuse players, believes that Outcome 1 will occur.

Notice that the diffuse players' total initial wealth is equal in Cases 1 and 2; however, the wealth of the entire population in Case 2 is twice what it is in Case 1. This is consistent with our parameter selections in Examples \ref{exmp 1} - \ref{exmp 3}. When studying the house's revenue here, we might have also chosen $\mu = 0.5 \times \mu_{100} $ and $w  = 0.5$ in Case 2, ensuring that the wealth of the entire population is identical in both scenarios. We comment on this shortly.

One might suspect that Cases 1 and 2 are quite similar, but this is not true. In Figure \ref{fig: Diff Pl EP Exmp 4}, we plot the diffuse players' {\it subjective} expected profit. For $\kappa \approx 1$, their {\it subjective} expected profit is higher in Case 2 than in Case 1. Otherwise, it is lower for all $\kappa$, and often, the drop is significant.

To explain this observation, we plot the implied probability of Outcome 1 in Figure \ref{fig: Impl Prob Exmp 4}. The implied probability of Outcome 1 is about 0.5 for all $\kappa$ in Case 1. For Case 2, it is almost $1 - \kappa$ for low $\kappa$ but approaches $0.5$ as $\kappa \uparrow 1$.

Since the diffuse players believe that Outcome 2 will occur but the atomic player believes that Outcome 1 will occur, the intuition appears to be as follows. Roughly, the diffuse players are willing to tolerate unfavorable betting conditions more than the atomic player. Despite the fact that each type of player is {\it basically} sure that the outcome they are betting on will occur, the diffuse players in Case 2 raise the size of their wagers much faster than the atomic player when $\kappa$ is low. The diffuse players do this at the expense of their own {\it subjective} expected profit, which is nearly zero in Case 2 until $\kappa \approx 0.84$. The atomic player does not substantially raise her wager on Outcome 1 until the values of the implied probability of Outcome 1 and $\kappa$ make her {\it subjective} expected profit per unit bet very high.

One could argue that the atomic player's strategy is superior to the strategies employed by the diffuse players, although we have made no assumption about the accuracy of her prediction. Perhaps the reason is that unlike the diffuse players, she considers her individual impact on the other wagerers because of her substantial wealth.

In Figure \ref{fig: House Rev Exmp 4}, we plot the house's revenue. We see that the house's revenue is higher in Case 1 for small $\kappa$ but higher in Case 2 for large $\kappa$. After selecting $\mu = 0.5 \times \mu_{100} $ and $w  = 0.5$ in Case 2 (see our explanation above), we re-plot the house's revenue in Figure \ref{fig: Norm House Rev Exmp 4}. Now the house's revenue is higher in Case 1 for all $\kappa$. Regardless of our normalization, the key point is that the house's maximum revenue is always much higher in Case 1.

Intuitively, the diffuse players appear to have a greater tolerance for poor betting conditions than the atomic player, as discussed previously. They are willing to bet even when $\kappa \approx 0.5$, and consequently, about $99\%$ of the diffuse players are wagering in Case 1 when $\kappa \approx 0.51$. The house loses revenue by raising $\kappa$, since the entire population has already wagered {\it almost} everything that it can. In Case 2, the atomic player's reluctance to significantly raise her wager on Outcome 1 until betting conditions improve means that the total amount wagered is low for most $\kappa$. The pool is large only if $\kappa$ is quite high, so the house does not collect much.

In Examples \ref{exmp 1} - \ref{exmp 3}, the house's revenue in Case 2 is at least as great as the house's revenue in Case 1 for all $\kappa$. Of course, the house's maximum revenue is then higher in Case 2 for these studies. Here, the pointwise analysis is not as clean, leading us to more directly consider the house's strategic behavior.

Thus far, we have understood the house take to be exogenously determined. One way to relax this assumption is to break our game into two stages. In the first stage, the betting organizer chooses a value for the house take in order to maximize her revenue. The second stage is identical to our current setup.

Under the new framework, the house needs to account for more than the distribution of initial wealth across the entire population. The house must also consider how the initial wealth is distributed across the population {\it for each type of player}. Independent of our normalization of the entire population's wealth, the house's revenue is maximized at $\kappa \approx 0.506$ in Case 1 and at $\kappa \approx 0.839$ in Case 2. Hence, the house selects these values of $\kappa$ in Cases 1 and 2, respectively.

In Figure \ref{fig: Diff Pl EP Exmp 4}, the diffuse players' {\it subjective} expected profit is about 0.0085 when $\kappa \approx 0.506$ in Case 2, while it is about 0.023 when $\kappa \approx 0.839$ in Case 2. These numbers arise from our original parameter choices, ensuring that the total initial wealth of the diffuse players is 1 in both scenarios. The diffuse players are not well off in either Case 1 or Case 2. We could still argue that the setup of Case 2 is to their advantage, in contrast to our earlier pointwise analysis. Roughly, because the atomic player is less willing to bet under poor wagering conditions, the house more easily preys upon the diffuse players when she is absent.

\begin{figure}[ht]
\centering
\begin{minipage}[b]{0.45\linewidth}
\includegraphics[scale=0.35]{DiffPlay_TotExpProf_FxnKappa_DiffAtom_EXMP4_FINAL}
\caption{}
\label{fig: Diff Pl EP Exmp 4}
\end{minipage}
\qquad \quad
\begin{minipage}[b]{0.45\linewidth}
\includegraphics[scale=0.35]{P1star_FxnKappa_DiffAtom_EXMP4_FINAL}
\caption{}
\label{fig: Impl Prob Exmp 4}
\end{minipage}
\end{figure}

\begin{figure}[ht]
\centering
\begin{minipage}[b]{0.45\linewidth}
\includegraphics[scale=0.35]{HouseRev_FxnKappa_DiffAtom_EXMP4_FINAL}
\caption{}
\label{fig: House Rev Exmp 4}
\end{minipage}
\qquad \quad
\begin{minipage}[b]{0.45\linewidth}
\includegraphics[scale=0.35]{RESCALED_HouseRev_FxnKappa_DiffAtom_EXMP4_FINAL}
\caption{}
\label{fig: Norm House Rev Exmp 4}
\end{minipage}
\end{figure}

\end{example}

\appendix

\section{Properties of $P^{f^{\star}, a^{\star}}$}\label{corr app}

We briefly revisit our discussion of Corollary \ref{impl prob kappa  0.5} by studying the properties of $P^{f^{\star}, a^{\star}}$ as a function of $\kappa$. We hope this highlights a few theoretical features of our model, though we do not relate our findings in Appendix \ref{corr app} to our central question. To generate these figures, we use the approach and notation of Section \ref{leb meas sect}.

Recall our claim that in an extreme case where nearly all of the initial wealth is held by those who believe that Outcome 2 is highly likely, $P^{f^{\star}, a^{\star} } \approx 1-  \kappa$ for all $\kappa$. We illustrate this in Figure \ref{fig: P1 star kappa All O2}, which depicts the map $\kappa \mapsto P^{f^{\star}, a^{\star} }$ when $\mu = \mu_{100}$, $q = 0$, and $w = 1$. Here, over 99\% of the diffuse players' initial wealth belongs to those who believe that the probability of Outcome 2 is at least 0.99. The atomic player believes that Outcome 2 is guaranteed to occur. As expected, the plot is visually indistinguishable from a plot of the map $\kappa \mapsto 1- \kappa$.

Often, it is more difficult to broadly describe attributes of the function $\kappa \mapsto P^{f^{\star}, a^{\star}}$. We know that $P^{f^{\star}, a^{\star}} \in \left( 1 - \kappa, \kappa \right)$, but our players' heterogeneity allows for a wide range of possibilities within these bounds. There is a rich interplay between their differing beliefs, effects on (\ref{payouts per unit bet words}), and wealth constraints. By plotting the map $\kappa \mapsto P^{f^{\star}, a^{\star}}$ under varying assumptions on $q$, $w$, and $\mu$, Figure \ref{fig: Wild P1star graph} displays a few of the myriad possibilities. For instance, the density $g$ of the $\mu$ used to generate Line $A$ is a linear combination of Gaussian densities with different means. Line $A$'s oscillations arise because of $g$'s distinct peaks.

\begin{figure}[ht]
\centering
\begin{minipage}[b]{0.45\linewidth}
\includegraphics[scale=0.35]{P1star_FxnKappa_AllO2}
\caption{}
\label{fig: P1 star kappa All O2}
\end{minipage}
\qquad \quad
\begin{minipage}[b]{0.45\linewidth}
\includegraphics[scale=0.35]{P1star_FxnKappa_WildP1star_FINAL}
\caption{}
\label{fig: Wild P1star graph}
\end{minipage}
\end{figure}

\section{Proof of Proposition \ref{atom equilib bets prop}}\label{prop 2 proof app}

We only prove (i). The argument for (ii) is similar, and as observed above, \ref{atom bet prop orig iii} is due to Isaacs (\cite{isaacs}).

Suppose that (\ref{atom bets out1 ineq alone}) holds. Then 
\begin{equation}\label{key isaacs ineq for prop}
\displaystyle\frac{1-q}{d_2} < \displaystyle\frac{1- \kappa q}{d_2} < \displaystyle\frac{1}{d_1 + d_2} < \displaystyle\frac{1}{\kappa \left( d_1 + d_2 \right) }  < \displaystyle\frac{q}{d_1}. 
\end{equation}
We get the first and third inequalities because $\kappa \in \left( 0 , 1\right)$. The second inequality is due to (\ref{1 bc impli}), while the last inequality is a rearrangement of (\ref{atom bets out1 ineq alone}). For now, the critical observation is that the leftmost quantity is less than the rightmost, allowing us to use the work of Isaacs (\cite{isaacs}).

Define the map
\begin{equation*}
\Phi : \mathbb{R}^2_{\geq 0} \longrightarrow \mathbb{R}
\end{equation*}
by 
\begin{align*}
\Phi \left( b_1 , b_2 \right) &= b_1 \left( \displaystyle\frac{\kappa \left( b_1 + d_1 + b_2 + d_2 \right) q}{b_1 + d_1}  - 1 \right)  + b_2 \left( \displaystyle\frac{\kappa \left( b_1 + d_1 + b_2 + d_2 \right) \left( 1  - q \right) }{b_2 + d_2}  - 1 \right)  . 
\end{align*}
$\Phi \left( b_1 , b_2 \right)$ is the atomic player's expected profit, given that she wagers $b_i$ on Outcome $i$, and it allows us to rewrite (\ref{atom max prob eqn prop}) as 
\begin{equation*}
 a_1   \bigg( \displaystyle\frac{ \kappa  \,  q }{ P^{f , a }} - 1 \bigg) + a_2  \left( \displaystyle\frac{ \kappa  \,  \left( 1 - q\right) }{  1 - P^{f , a } } - 1 \right)  = \displaystyle\sup_{\substack{   b_1 , b_2 \geq 0 \\ b_1 + b_2 \leq w }} \Phi \left( b_1 , b_2 \right)  . 
\end{equation*}
Technically, strategy profiles $\left( b_1 , b_2 \right)$ with
\begin{equation*}
 b_1 + b_2 > w
 \end{equation*}
 are not feasible, but this is unimportant.

Isaacs showed that $\Phi$ has a unique global maximum on $\mathbb{R}^2_{\geq 0}$, denoted $\left( b_1^{\star} , b_2^{\star}  \right)$, given by
\begin{equation*}
\left( b_1^{\star} , b_2^{\star}  \right)  = \left( \displaystyle\sqrt{ \displaystyle\frac{ \kappa  q d_1 d_2}{1 - \kappa q }  } -  d_1 , 0 \right) .
\end{equation*}
This proves that (i) holds when $w \geq b_1^{\star}$.

 
 The case where $w < b_1^{\star}$ is handled with elementary calculus. First, observe that (\ref{atom bets out1 ineq alone}) implies that $q > 0$ and 
\begin{align*}
\partial_{b_1 } \Phi \left( 0 , 0 \right) = \displaystyle\frac{ \kappa \left( d_1 + d_2 \right) q}{ d_1  }  - 1 > 0 .
\end{align*}
Since $\partial_{b_1 } \Phi \left(  b_1^{\star}  , 0 \right) = 0$ and 
\begin{equation*}
\partial_{b_1 b_1} \Phi \left( b_1 , b_2 \right) = - \displaystyle\frac{ 2 \kappa d_1 \left( b_2 + d_2 \right) q}{\left( b_1 + d_1 \right)^3}
\end{equation*}
is always negative, it follows that $\partial_{b_1} \Phi \left( b_1  , 0 \right) > 0$ for $0 \leq b_1 < b_1^{\star}$. The interpretation is that as long as the atomic player has not yet wagered on Outcome 2, she should wager as much as she can up to $b_1^{\star}$ on Outcome 1.

We only need to argue that she should never wager on Outcome 2, that is, $\partial_{b_2} \left( b_1 , b_2 \right)$ is negative whenever $b_1 + b_2 \leq w$. Rearranging (\ref{key isaacs ineq for prop}) implies that 
\begin{equation*}
 \partial_{b_2 } \Phi \left( 0, 0 \right) = \displaystyle\frac{ \kappa  \left( d_1 + d_2 \right) \left( 1 - q \right)}{ d_2  }  - 1 < 0 . 
\end{equation*}
Now $\partial_{b_2 } \Phi \left(  b_1^\star , 0 \right) \leq 0$\footnote{We know that $\partial_{b_1 } \Phi \left(  b_1^{\star}  , 0 \right) = 0$ and $\partial_{b_2 } \Phi \left(  b_1^\star , 0 \right) \leq 0$ since $\Phi$ has a unique global maximum on $\mathbb{R}^2_{\geq 0}$ at $\left( b_1^{\star} , b_2^{\star}  \right)$.} and 
\begin{equation*}
\partial_{b_2 b_1 } \Phi \left( b_1 , b_2 \right) = \displaystyle\frac{ \kappa d_1  q }{ \left( b_1 + d_1 \right)^2} + \displaystyle\frac{ \kappa d_2 \left( 1 - q \right) }{ \left( b_2 + d_2 \right)^2} 
\end{equation*}
is positive everywhere, so $\partial_{b_2} \Phi \left( b_1 , 0 \right)< 0$ for $0 \leq b_1 < b_1^\star$. Since 
\begin{equation*}
\partial_{b_2 b_2} \Phi \left( b_1 , b_2 \right) = - \displaystyle\frac{ 2 \kappa d_2 \left( b_1 + d_1 \right) \left( 1 - q \right)}{\left( b_2 + d_2 \right)^3}
\end{equation*}
is always non-positive, we conclude that $\partial_{b_2} \left( b_1 , b_2 \right)$ must be negative whenever $0 \leq b_1 < b_1^\star$. In particular, $\partial_{b_2} \left( b_1 , b_2 \right)$ is negative, if $ b_1 + b_2 \leq w$.

\qed

\section{Proof of Theorem \ref{main thm}}\label{main thm proof app}

{\bf Step \mystep{step 1 orig}: If $\left(f^{\star}, a^{\star} \right)$ is an equilibrium, then $d_1^{\star}$, $d_2^{\star} > 0$. } 

\vspace{2mm}

We show that the total amount wagered on each outcome by the diffuse players is positive in equilibrium: $d_1^{\star}$, $d_2^{\star} > 0$. We use this result to complete the proof of the {\it only if} direction of Theorem \ref{main thm} in Step \ref{step 2 orig}. It also allows us to use Propositions \ref{diff equilib bets prop} and \ref{atom equilib bets prop} in the proof of the {\it if} direction.

Suppose instead that we can find a pure-strategy Nash equilibrium $\left( f^{\star} , a^{\star} \right)$ with $d_1^{\star} = 0$. It follows immediately that $d_2^{\star} > 0$: otherwise, \ref{NE def orig iii} of Definition \ref{maj min NE def} implies that $a_1^{\star}= a_2^{\star}= 0$, which contradicts \ref{NE def orig i} of Definition \ref{maj min NE def}. A consequence of Isaacs' work is that $q = 0$ (\cite{isaacs}).\footnote{If $q > 0$, then  $a_1^{\star}$ is undefined. Roughly, the atomic player needs to make an arbitrarily small bet on Outcome 1 but is not allowed to do so.} Then \ref{NE def orig iii} of Definition \ref{maj min NE def} implies that $a_1^{\star}= a_2^{\star}= 0$. In particular, $P^{f^{\star}, a^{\star}} = 0$. By \ref{NE def orig i} of Definition \ref{maj min NE def}, $f_2^{\star} \equiv 0$, which is impossible since $d_2^{\star} > 0$. 

Hence, $d_1^{\star} > 0$. It follows similarly that $d_2^{\star} > 0$.

\vspace{2mm}

\noindent \textbf{Step \mystep{step 2 orig}: If an equilibrium exists, then $\kappa >  0.5$.} 

\vspace{2mm}

We finish the proof of the {\it only if} direction of Theorem \ref{main thm} by formalizing the heuristics given previously. Suppose that $\left( f^{\star} , a^{\star} \right)$ is an equilibrium. Step \ref{step 1 orig} implies that $d_1^{\star}$, $d_2^{\star} > 0$. Then both 
\begin{equation}\label{Pstar in 1 min al al}
\displaystyle\frac{P^{f^{\star}, a^{\star} }}{ \kappa } < 1 \qquad \text{and} \qquad \displaystyle\frac{1 - P^{f^{\star}, a^{\star} } }{ \kappa }  < 1 
\end{equation}
by Proposition \ref{diff equilib bets prop}. Rearranging (\ref{Pstar in 1 min al al}) finishes the argument.

\vspace{2mm}

\noindent \textbf{Step \mystep{step 3 alt 2}: Definition and discussion of $\bar{p}_i$ (when $\kappa >  0.5$).}

\vspace{2mm}

Due to Step \ref{step 2 orig}, we assume that $\kappa >  0.5$ for the remainder of the proof (Steps \ref{step 3 alt 2} - \ref{step 5 orig}). This assumption ensures that our discussions are not vacuous, as we implicitly rely on the positive length of the interval $\left[ 1- \kappa, \kappa \right]$.

We now define and discuss the quantities $\bar{p}_1$, $\bar{p}_2 \in \left[ 1- \kappa , \kappa \right]$. We use this notation when we define our $\zeta_i$ maps in Step \ref{step 4 alt 2} and $\varphi$ in Step \ref{step 5 alt 2}. Roughly, $\bar{p}_1$ is a na\"{\i}ve lower bound for the implied probability of Outcome 1 when the atomic player wagers on Outcome 1. Similarly, $\bar{p}_2$ is an upper bound for the implied probability of Outcome 1 when the atomic player wagers on Outcome 2. Both are derived from Propositions \ref{diff equilib bets prop} and \ref{atom equilib bets prop}.

Since $\mu$'s density is positive, the map 
\begin{equation}\label{out1 bc map}
p \mapsto \displaystyle\frac{  \mu \left( \frac{ p  }{\kappa} , 1 \right]   }{  \kappa \left( \mu \left( \frac{ p  }{\kappa} , 1 \right]      + \mu  \left[0, 1 - \frac{1 -p }{\kappa}  \right)   \right)  }
\end{equation}
is decreasing and continuous on $\left[ 1 - \kappa , \kappa \right]$. Its value is $1/ \kappa$ at $p = \left( 1- \kappa \right)$ and 0 at $p = \kappa$. Hence, there is a unique $\bar{p}_1 \in \left( 1 - \kappa , \kappa \right]$ such that 
\begin{equation}\label{pbar1 defn eqn}
q = \displaystyle\frac{  \mu \left( \frac{ \bar{p}_1  }{\kappa} , 1 \right]   }{  \kappa \left( \mu \left( \frac{ \bar{p}_1   }{\kappa} , 1 \right]      + \mu  \left[0, 1 - \frac{1 - \bar{p}_1  }{\kappa}  \right)   \right)  } . 
\end{equation}

Clearly, $\bar{p}_1 = \kappa$, or equivalently,  $\left( \bar{p}_1 , \kappa \right]$ is empty, if and only if $q = 0$. Regardless of $q$'s value, we know 
\begin{align}\label{q ineq pbar1 ints}
\left\{ \begin{array}{cc}
q \leq \displaystyle\frac{  \mu \left( \frac{ p  }{\kappa} , 1 \right]   }{  \kappa \left( \mu \left( \frac{ p  }{\kappa} , 1 \right]      + \mu  \left[0, 1 - \frac{1 -p }{\kappa}  \right)   \right)  } \qquad & \qquad \text{if} \quad p \in \left[ 1- \kappa, \bar{p}_1 \right]  \\
& \quad \\
q > \displaystyle\frac{  \mu \left( \frac{ p  }{\kappa} , 1 \right]   }{  \kappa \left( \mu \left( \frac{ p  }{\kappa} , 1 \right]      + \mu  \left[0, 1 - \frac{1 -p }{\kappa}  \right)   \right)  } \qquad & \qquad \text{if} \quad p \in \left( \bar{p}_1 , \kappa \right]  
\end{array} \right. .
\end{align}
We later connect this observation to (\ref{atom bets out1 ineq alone}) and (\ref{atom bets noth ineq alone}) when defining $\zeta_i$ and $\varphi$.

Similarly, the map 
\begin{equation}\label{out2 bc map}
p \mapsto \displaystyle\frac{  \mu  \left[0, 1 - \frac{1 -p }{\kappa}  \right)    }{  \kappa \left( \mu \left( \frac{ p  }{\kappa} , 1 \right]      + \mu  \left[0, 1 - \frac{1 -p }{\kappa}  \right)   \right)  }
\end{equation}
is increasing and continuous on $\left[ 1 - \kappa , \kappa \right]$. Its value is 0 at $p = \left( 1- \kappa \right)$ and $1/ \kappa$ at $p = \kappa$, so there is a unique $\bar{p}_2 \in \left[ 1 - \kappa , \kappa \right)$ such that 
\begin{equation*}
1 - q = \displaystyle\frac{  \mu  \left[0, 1 - \frac{1 - \bar{p}_2  }{\kappa}  \right) }{  \kappa \left( \mu \left( \frac{ \bar{p}_2   }{\kappa} , 1 \right]      + \mu  \left[0, 1 - \frac{1 - \bar{p}_2  }{\kappa}  \right)   \right)  } . 
\end{equation*}

Now $\bar{p}_2 =  \left( 1-\kappa \right)$, i.e.,  $\left[ 1- \kappa , \bar{p}_2 \right)$ is empty, if and only if $q = 1$. In any case,
\begin{align}\label{q ineq pbar2 ints}
\left\{ \begin{array}{cc}
1 - q > \displaystyle\frac{  \mu  \left[0, 1 - \frac{1 -p }{\kappa}  \right)    }{  \kappa \left( \mu \left( \frac{ p  }{\kappa} , 1 \right]      + \mu  \left[0, 1 - \frac{1 -p }{\kappa}  \right)   \right)  } \qquad & \qquad \text{if} \quad p \in \left[ 1 - \kappa,  \bar{p}_2 \right)  \\
&\\
1 - q \leq \displaystyle\frac{  \mu  \left[0, 1 - \frac{1 -p }{\kappa}  \right)    }{  \kappa \left( \mu \left( \frac{ p  }{\kappa} , 1 \right]      + \mu  \left[0, 1 - \frac{1 -p }{\kappa}  \right)   \right)  } \qquad & \qquad \text{if} \quad p \in \left[  \bar{p}_2  , \kappa \right]  
\end{array} \right.  .
\end{align}
This comment relates to (\ref{atom bets out2 ineq alone}) and (\ref{atom bets noth ineq alone}), as reflected in our definitions of $\zeta_i$ and $\varphi$.

We conclude by observing that we have $\bar{p}_2 < \bar{p}_1$ since $\kappa \in \left(  0.5 , 1  \right)$. This is another key remark for our future definition of $\varphi$.

\vspace{2mm}

\noindent \textbf{Step \mystep{step 4 alt 2}: Definition and discussion of $\zeta_i$ (when $\kappa >  0.5$).}

\vspace{2mm}

We define and discuss the functions $\zeta_1$ and $\zeta_2$. We use this notation in our definition of $\varphi$ in Step \ref{step 5 alt 2}. Intuitively, $\zeta_i \left( p \right)$ represents the amount that the atomic player wagers on Outcome $i$ when she makes a positive wager on Outcome $i$, does not face a budget constraint, and the implied probability of Outcome 1 is $p$ (cf. (\ref{atom bets out1 soln alone}) and (\ref{atom bets out2 soln alone})).

For $p \in \left[ \bar{p}_1 , \kappa \right]$, define $\zeta_1$ by 
\begin{equation*}
\zeta_1 \left(  p \right) =  \sqrt{\frac{\kappa q}{1 - \kappa q} \,  \mu  \left( \frac{p}{\kappa} , 1 \right]  \mu  \left[0, 1 - \frac{1 -p}{\kappa}  \right)   }  - \mu  \left( \frac{p}{\kappa} , 1 \right] . 
\end{equation*}
Since $\mu$ has a positive density, (\ref{q ineq pbar1 ints}) implies that 
\begin{equation*}
\mu  \left( \frac{p}{\kappa} , 1 \right]  > 0 , \qquad \mu  \left[0, 1 - \frac{1 -p}{\kappa}  \right)  > 0 , \qquad \text{and} \qquad q > \displaystyle\frac{  \mu \left( \frac{ p  }{\kappa} , 1 \right]   }{  \kappa \left( \mu \left( \frac{ p  }{\kappa} , 1 \right]      + \mu  \left[0, 1 - \frac{1 -p }{\kappa}  \right)   \right)  }
\end{equation*}
for $p \in \left( \bar{p}_1 , \kappa \right)$. It follows as in (\ref{1 bc impli}) that $\zeta_1$ is positive on $\left( \bar{p}_1 , \kappa \right)$. We also have 
\begin{equation*}
\zeta_1 \left(  \bar{p}_1 \right) = \zeta_1 \left(  \kappa \right) = 0
\end{equation*}
 from (\ref{pbar1 defn eqn}).

We define $\zeta_2$ for $p \in \left[ 1 - \kappa,  \bar{p}_2 \right]$ by 
\begin{equation*}
\zeta_2 \left(  p \right) =  \sqrt{\frac{\kappa \left( 1 - q\right) }{1 - \kappa \left( 1 - q \right)} \,  \mu  \left( \frac{p}{\kappa} , 1 \right]  \mu  \left[0, 1 - \frac{1 -p}{\kappa}  \right)   }  -  \mu  \left[0, 1 - \frac{1 -p}{\kappa}  \right)  . 
\end{equation*}
Using the techniques from our discussion of $\zeta_1$, we see that $\zeta_2$ is positive on $\left( 1 - \kappa,  \bar{p}_2 \right)$ and 
\begin{equation*}
 \zeta_2 \left( 1-  \kappa \right) = \zeta_2 \left(  \bar{p}_2 \right) = 0 . 
\end{equation*}

\vspace{2mm}

\noindent \textbf{Step \mystep{step 5 alt 2}: Definition and discussion of $\varphi$ (when $\kappa >  0.5$).}

\vspace{2mm}

We introduce the {\it implied probability map} $\varphi$. We see in Steps \ref{step 4 orig} and \ref{step 5 orig} that a fixed-point of $\varphi$ corresponds to a pure-strategy Nash equilibrium in an obvious way and vice versa, which ultimately allows us to complete Theorem \ref{main thm}'s proof.

$\varphi$'s domain is the set of candidates $p$ for the implied probability of Outcome 1. We need only consider $p \in \left[ 1- \kappa, \kappa \right]$, as we observe in Step \ref{step 5 orig}. Proposition \ref{diff equilib bets prop} says that
\begin{align}\label{diff bet examps step 3}
\mu  \left( \frac{p}{\kappa} , 1 \right]  \qquad \text{and} \qquad \mu  \left[0, 1 - \frac{1 -p}{\kappa}  \right) 
\end{align}
are the total amounts wagered by the diffuse players on Outcomes 1 and 2, respectively. The atomic player's wagers are described by Proposition \ref{atom equilib bets prop}. For example, if 
\begin{equation*}
 q > \displaystyle\frac{  \mu \left( \frac{ p  }{\kappa} , 1 \right]   }{  \kappa \left( \mu \left( \frac{ p  }{\kappa} , 1 \right]      + \mu  \left[0, 1 - \frac{1 -p }{\kappa}  \right)   \right)  } , 
\end{equation*}
that is, $p \in \left( \bar{p}_1 , \kappa \right]$, then the atomic player wagers nothing on Outcome 2 and 
\begin{equation*}
\displaystyle\min \left\{ w  , \zeta_1 \left( p \right) \right\}
\end{equation*}
on Outcome 1. Recalculating the implied probability of Outcome 1 using these bets, we get 
\begin{equation*}
\displaystyle\frac{ \displaystyle\min \left\{  w , \zeta_1 \left( p \right) \right\} + \mu  \left( \frac{p}{\kappa} , 1 \right]  }{ \displaystyle\min \left\{ w , \zeta_1 \left( p \right) \right\} + \mu  \left( \frac{p}{\kappa} , 1 \right]    + \mu  \left[0, 1 - \frac{1 -p}{\kappa}  \right)  } 
\end{equation*} 
from Definition \ref{impl prob defn}. We set $\varphi \left( p \right)$ to this value. In some sense, this procedure is only potentially meaningful when $p$ is equal to $\varphi \left( p \right)$, leading to our focus on fixed-points. 

Here is the complete definition of $\varphi$ suggested by this explanation:
\begin{align*}
\varphi \left( p \right) = \left\{ \begin{array}{cc}
\displaystyle\frac{\mu  \left( \frac{p}{\kappa} , 1 \right]  }{ \displaystyle\min \left\{ w , \zeta_2 \left( p \right) \right\} + \mu  \left( \frac{p}{\kappa} , 1 \right]    + \mu  \left[0, 1 - \frac{1 -p}{\kappa}  \right)  } & \quad \text{if} \quad p \in \left[ 1 - \kappa,  \bar{p}_2 \right) \\
&\\
\displaystyle\frac{  \mu  \left( \frac{p}{\kappa} , 1 \right]  }{  \mu  \left( \frac{p}{\kappa} , 1 \right]    + \mu  \left[0, 1 - \frac{1 -p}{\kappa}  \right)  } & \quad \text{if} \quad p \in \left[  \bar{p}_2 , \bar{p}_1 \right] \\
&\\
\displaystyle\frac{ \displaystyle\min \left\{ w , \zeta_1 \left( p \right) \right\} + \mu  \left( \frac{p}{\kappa} , 1 \right]  }{ \displaystyle\min \left\{ w , \zeta_1 \left( p \right) \right\} + \mu  \left( \frac{p}{\kappa} , 1 \right]    + \mu  \left[0, 1 - \frac{1 -p}{\kappa}  \right)  } &  \quad \text{if} \quad p \in \left( \bar{p}_1 , \kappa \right]
\end{array} \right. .
\end{align*}

Observe that $\varphi$ is continuous on $\left[ 1- \kappa , \kappa \right]$ since $\mu$ has a positive density and $\zeta_i \left( \bar{p}_i \right) = 0$ (see Step \ref{step 4 alt 2}). This helps us prove that $\varphi$ has a unique fixed-point in Step \ref{step 6 uniq fp}.

\vspace{2mm}

\noindent \textbf{Step \mystep{step 6 alt varphi dec}: $\varphi$ is decreasing (when $\kappa >  0.5$).} 

\vspace{2mm}

We show that $\varphi$ is decreasing. We use this property in Step \ref{step 6 uniq fp} to argue that $\varphi$ has a unique fixed-point.

Since $\mu$'s density is positive everywhere, $\varphi$ is decreasing on $\left[ \bar{p}_2 , \bar{p}_1 \right]$. It suffices to show that $\varphi$ is decreasing on both $\left[ 1 - \kappa,  \bar{p}_2 \right)$ and $\left( \bar{p}_1 , \kappa \right]$ because $\varphi$ is continuous. The proofs are similar, so we consider the former case. 

We are done, if $\left[ 1 - \kappa,  \bar{p}_2 \right)$ is empty. Suppose that it is not. Recall from Step \ref{step 3 alt 2} that this is equivalent to assuming that $q < 1$. Define two functions $\varphi_2$ and $\varphi_2^w$ on $\left[ 1 - \kappa,  \bar{p}_2 \right)$ by 
\begin{align*}
\varphi_2 \left( p \right) &= \displaystyle\frac{\mu  \left( \frac{p}{\kappa} , 1 \right]  }{ \zeta_2 \left( p \right)  + \mu  \left( \frac{p}{\kappa} , 1 \right]    + \mu  \left[0, 1 - \frac{1 -p}{\kappa}  \right)  } \\
\varphi_2^w \left( p \right) &= \displaystyle\frac{\mu  \left( \frac{p}{\kappa} , 1 \right]  }{ w+ \mu  \left( \frac{p}{\kappa} , 1 \right]    + \mu  \left[0, 1 - \frac{1 -p}{\kappa}  \right)  } .
\end{align*}
The point is that 
\begin{equation*}
\varphi \left( p \right) = \displaystyle\max \left\{ \varphi_2 \left( p \right) , \varphi_2^w \left( p \right) \right\} 
\end{equation*}
on $\left[ 1 - \kappa,  \bar{p}_2 \right)$, so it is enough to show that $\varphi_2$ and $\varphi_2^w$ are both decreasing.

Clearly, $\varphi_2^w$ is decreasing. Denoting the positive and continuous density of $\mu$ by $g$, we see that $\varphi_2$ is decreasing because 
\begin{align*}
\varphi_{2}^{\prime} \left( p \right) &= -  \displaystyle\frac{  g \left( \frac{p}{\kappa} \right) \sqrt{\frac{\kappa \left( 1 -q \right)}{1 - \kappa \left( 1 -q \right)}   \mu  \left( \frac{p }{\kappa} , 1 \right] \mu  \left[0, 1 - \frac{1 -p}{\kappa}  \right)  } }{ 2\kappa  \left( \sqrt{\frac{\kappa \left( 1 -q \right)}{1 - \kappa \left( 1 -q \right)}   \mu  \left( \frac{p }{\kappa} , 1 \right]    \mu   \left[0, 1 - \frac{1 -p}{\kappa}  \right)    }+ \mu   \left( \frac{p }{\kappa} , 1 \right]     \right)^2 } \\
&\qquad - \displaystyle\frac{ \left( \frac{\kappa \left( 1 -q \right)}{1 - \kappa \left( 1 -q \right)} \right)^{ 0.5} \mu  \left( \frac{p}{\kappa} , 1 \right]^{3/2} \mu  \left[0, 1 - \frac{1 -p}{\kappa}  \right)^{- 0.5} g \left( 1-  \frac{1- p}{\kappa} \right) }{ 2\kappa  \left( \sqrt{\frac{\kappa \left( 1 -q \right)}{1 - \kappa \left( 1 -q \right)}   \mu  \left( \frac{p }{\kappa} , 1 \right]  \mu   \left[0, 1 - \frac{1 -p}{\kappa}  \right)  }+ \mu  \left( \frac{p }{\kappa} , 1 \right]   \right)^2 } . 
\end{align*}

\vspace{2mm}

\noindent \textbf{Step \mystep{step 6 uniq fp}: $\varphi$ has a unique fixed-point (when $\kappa >  0.5$).} 

\vspace{2mm}

 We show that $\varphi$ has a unique fixed-point. We use the existence of $\varphi$'s fixed-point to prove the existence of a pure-strategy Nash equilibrium in Step \ref{step 4 orig}, while we use the uniqueness of $\varphi$'s fixed-point to demonstrate that the equilibrium is unique in Step \ref{step 5 orig}.

The proof is simple: $\varphi \left( 1 - \kappa \right) = 1$ and $\varphi \left( \kappa \right) = 0$. Since $\varphi$ is continuous and decreasing (see Steps \ref{step 5 alt 2} - \ref{step 6 alt varphi dec}), it must have a unique fixed-point.

\vspace{2mm}

\noindent \textbf{Step \mystep{step 4 orig}: An equilibrium exists (when $\kappa >  0.5$).} 

\vspace{2mm}

We show that a pure-strategy Nash equilibrium exists. Based on Step \ref{step 6 uniq fp}, we need only describe how to construct an equilibrium from a fixed-point of $\varphi$.

Suppose that $\hat{P}$ is a fixed-point of $\varphi$. The proofs for each case are similar, so we only present the argument when $\hat{P} \in \left[ 1 - \kappa , \bar{p}_2 \right)$. Define a feasible strategy profile $\left( f , a \right)$ by 
\begin{align*}
f_1 \left( p \right) &= \left\{ \begin{array}{cc}
1 & \quad \text{if } \, \,  p \geq \hat{P}  / \kappa \\
&\\
0 & \quad \text{if } \, \, p < \hat{P}  / \kappa \\
\end{array}   \right. , \qquad \qquad
f_2 \left( p \right) = \left\{ \begin{array}{cc}
1 & \quad \text{if } \, \, 1 - p  \geq \left( 1 - \hat{P} \right) / \kappa  \\
&\\
0 & \quad \text{if } \, \, 1 - p  < \left( 1 - \hat{P} \right) / \kappa  \\
\end{array} \right. ,
\end{align*}
$a_1 = 0$, and
\begin{equation*}
a_2 = \displaystyle\min \left\{  w , \zeta_2 \left( \hat{P} \right) \right\} . 
\end{equation*}
In particular, 
\begin{equation*}
d_1 = \mu  \left( \frac{\hat{P} }{\kappa} , 1 \right]  \qquad \text{and} \qquad d_2 = \mu  \left[0, 1 - \frac{1 - \hat{P}}{\kappa}  \right) .
\end{equation*}

(\ref{q ineq pbar2 ints}) implies that (\ref{atom bets out2 ineq alone}) is satisfied. Since $\varphi \left( 1 - \kappa  \right) = 1$, we know that $\hat{P} \ne \left( 1 - \kappa  \right)$. Hence, $d_1$, $d_2 > 0$ because the density of $\mu$ is positive everywhere. We then have \ref{NE def orig iii} of Definition \ref{maj min NE def} by Proposition \ref{atom equilib bets prop}.

Since $\hat{P}$ is a fixed-point of $\varphi$,
\begin{equation*}
\hat{P} = \frac{ \mu  \left( \frac{\hat{P}}{\kappa} , 1 \right]  }{ \displaystyle\min \left\{  w , \zeta_2 \left( \hat{P} \right) \right\}  + \mu  \left( \frac{\hat{P}}{\kappa} , 1 \right]    + \mu  \left[0, 1 - \frac{1 -\hat{P}}{\kappa}  \right)  } = P^{f,a}. 
\end{equation*}
From Proposition \ref{diff equilib bets prop}, we see that \ref{NE def orig ii} of Definition \ref{maj min NE def} holds. 

This completes the proof, as (i) of Definition \ref{maj min NE def} is obviously satisfied.

\vspace{2mm}

\noindent \textbf{Step \mystep{step 5 orig}: The equilibrium in Step \ref{step 4 orig} is unique (when $\kappa >  0.5$).} 

\vspace{2mm}

We conclude Theorem \ref{main thm}'s proof by showing that the equilibrium in Step \ref{step 4 orig} is unique. The key observation is that $\varphi$'s fixed-point is also unique (see Step \ref{step 6 uniq fp}).

First, we describe how to construct a fixed-point of $\varphi$, given an equilibrium $\left( f^{\star} , a^{\star} \right)$. From Proposition \ref{diff equilib bets prop} and Step \ref{step 1 orig}, we know that
\begin{equation*}
d_1^{\star} = \mu  \left( \frac{P^{f^{\star}, a^{\star}}  }{\kappa} , 1 \right] > 0 \qquad \text{and} \qquad d_2^{\star} = \mu  \left[0, 1 - \frac{1 - P^{f^{\star}, a^{\star}}  }{\kappa}  \right) > 0 . 
\end{equation*}
In particular, $P^{f^{\star}, a^{\star}}  \in \left( 1 - \kappa , \kappa \right)$. By Proposition \ref{atom equilib bets prop}, there are three possibilities for $a^{\star}$.

Assume that $a_1^{\star} > 0$ and $a_2^{\star} = 0$. The other cases can be handled similarly. By Proposition \ref{atom equilib bets prop}, (\ref{atom bets out1 ineq alone}) holds. Hence, $P^{f^{\star}, a^{\star}} \in \left( \bar{p}_1 , \kappa \right)$ due to (\ref{q ineq pbar1 ints}) and 
\begin{align*}
a_1^{\star} = \displaystyle\min \left\{ w ,  \sqrt{ \displaystyle\frac{ \kappa  q d_1^{\star} d_2^{\star} }{1 - \kappa q }  } -  d_1^{\star} \right\} =  \displaystyle\min \left\{ w , \zeta_1 \left( P^{f^{\star}, a^{\star}}  \right) \right\} . 
\end{align*}
By Definition \ref{impl prob defn}, 
 \begin{align*}
 P^{f^{\star}, a^{\star}} &= \displaystyle\frac{ \displaystyle\min \left\{ w , \zeta_1 \left(  P^{f^{\star}, a^{\star}} \right) \right\} + \mu  \left( \frac{ P^{f^{\star}, a^{\star}}}{\kappa} , 1 \right]  }{ \displaystyle\min \left\{ w , \zeta_1 \left(  P^{f^{\star}, a^{\star}} \right) \right\} + \mu  \left( \frac{ P^{f^{\star}, a^{\star}}}{\kappa} , 1 \right]    + \mu  \left[0, 1 - \frac{1 - P^{f^{\star}, a^{\star}}}{\kappa}  \right)  } \\
 &= \varphi \left(  P^{f^{\star}, a^{\star}} \right), 
  \end{align*} 
 that is, $ P^{f^{\star}, a^{\star}}$ is a fixed-point of $\varphi$.

 Now suppose that we have another equilibrium $\left( f^{\diamond} , a^{\diamond} \right)$. Using the method just described, we see that $P^{f^{\diamond}, a^{\diamond}}$ is a fixed-point of $\varphi$. Since there is exactly one fixed-point of $\varphi$ by Step \ref{step 6 uniq fp}, $P^{f^{\diamond}, a^{\diamond}} = P^{f^{\star}, a^{\star}}$. 
 
 By Step \ref{step 1 orig} and Proposition \ref{diff equilib bets prop}, $f^{\star}$ and $f^{\diamond}$ necessarily agree everywhere, except when
\begin{align*}
p = P^{f^{\star}, a^{\star}} / \kappa \qquad \text{or} \qquad   1- p = \left( 1 - P^{f^{\star}, a^{\star}} \right) / \kappa .
\end{align*}
Since $\mu$ has a density, it follows that $f^{\star} = f^{\diamond}$ $\mu$-a.s. Clearly, $d_1^{\star} = d_1^{\diamond}$ and $d_2^{\star} = d_2^{\diamond}$. Proposition \ref{atom equilib bets prop} implies that $a^{\star} = a^{\diamond}$.

\qed

\bibliographystyle{siam}
\bibliography{MFGBib}

\end{document}